\numberwithin{equation}{section}
\definecolor{cqcqcq}{rgb}{0.7529411764705882,0.7529411764705882,0.7529411764705882}
\definecolor{qqzzff}{rgb}{0.,0.6,1.}
\definecolor{wwccqq}{rgb}{0.4,0.8,0.}
\definecolor{ffqqqq}{rgb}{1.,0.,0.}
\title{A Quantum Twistor Bundle}
\author{Sophie Emma Zegers}
\address{Charles University, Sokolovská 49/83, 186 75 Praha 8, Czech Republic} 
\email{sophieemmazegers@gmail.com}
\thanks{This work was  supported by  the 
DFF-Research Project 2 on `Automorphisms and invariants of operator algebras', Nr. 7014--00145B and by the Carlsberg Foundation through an Internationalisation Fellowship.}
\date\today
\subjclass[2020]{46L65; 58B34} 
\keywords{quantum twistor bundle, quantum instanton bundle, noncommutative manifold, $C^*$-algebra of functions}
\begin{document}

\begin{abstract}
We investigate a quantum twistor bundle constructed as a $U(1)$-quotient of the quantum instanton bundle of Bonechi, Ciccoli and Tarlini. 
It is an example of a noncommutative bundle fulfilling conditions of the purely algebraic framework proposed by Brzezi\'{n}ski and 
Szyma\'{n}ski. We provide a detailed description of the corresponding $C^*$-algebra of `continuous functions' on its noncommutative total space. 
\end{abstract}

\theoremstyle{plain}
\newtheorem{theorem}{Theorem}[section]
\newtheorem{corollary}[theorem]{Corollary}
\newtheorem{lemma}[theorem]{Lemma}
\newtheorem{proposition}[theorem]{Proposition}
\newtheorem{conjecture}[theorem]{Conjecture}
\newtheorem{commento}[theorem]{Comment}
\newtheorem{problem}[theorem]{Problem}
\newtheorem{remarks}[theorem]{Remarks}

\theoremstyle{definition}
\newtheorem{example}[theorem]{Example}

\newtheorem{definition}[theorem]{Definition}

\newtheorem{remark}[theorem]{Remark}

\newcommand{\Nb}{{\mathbb{N}}}
\newcommand{\Rb}{{\mathbb{R}}}
\newcommand{\Tb}{{\mathbb{T}}}
\newcommand{\Zb}{{\mathbb{Z}}}
\newcommand{\Cb}{{\mathbb{C}}}

\newcommand{\CP}{\mathbb{C}P}

\newcommand{\Af}{\mathfrak A}
\newcommand{\Bf}{\mathfrak B}
\newcommand{\Ef}{\mathfrak E}
\newcommand{\Gf}{\mathfrak G}
\newcommand{\Hf}{\mathfrak H}
\newcommand{\Kf}{\mathfrak K}
\newcommand{\Lf}{\mathfrak L}
\newcommand{\Mf}{\mathfrak M}
\newcommand{\Rf}{\mathfrak R}

\newcommand{\x}{\mathfrak x}
\def\C{\mathbb C}
\def\N{\mathbb N}
\def\R{\mathbb R}
\def\T{\mathbb T}
\def\Z{\mathbb Z}

\def\A{{\mathcal A}}
\def\B{{\mathcal B}}
\def\D{{\mathcal D}}
\def\E{{\mathcal E}}
\def\F{{\mathcal F}}
\def\G{{\mathcal G}}
\def\H{{\mathcal H}}
\def\J{{\mathcal J}}
\def\K{{\mathcal K}}
\def\LL{{\mathcal L}}
\def\N{{\mathcal N}}
\def\M{{\mathcal M}}
\def\N{{\mathcal N}}
\def\OO{{\mathcal O}}
\def\P{{\mathcal P}}
\def\Q{{\mathcal Q}}
\def\SS{{\mathcal S}}
\def\T{{\mathcal T}}
\def\U{{\mathcal U}}
\def\W{{\mathcal W}}

\def\ext{\operatorname{Ext}}
\def\span{\operatorname{span}}
\def\clsp{\overline{\operatorname{span}}}
\def\Ad{\operatorname{Ad}}
\def\ad{\operatorname{Ad}}
\def\tr{\operatorname{tr}}
\def\id{\operatorname{id}}
\def\en{\operatorname{End}}
\def\aut{\operatorname{Aut}}
\def\out{\operatorname{Out}}
\def\coker{\operatorname{coker}}
\def\pol{\mathcal{O}}

\def\la{\langle}
\def\ra{\rangle}
\def\rh{\rightharpoonup}
\def\cl{\textcolor{blue}{$\clubsuit$}}

\def\bst{\textcolor{blue}{$\bigstar$}}

\newcommand{\Aut}[1]{\text{Aut}(#1)}
\newcommand{\Fn}{\mathcal{F}_n}
\newcommand{\On}{\mathcal{O}_n}
\newcommand{\Dn}{\mathcal{D}_n}
\newcommand{\Sn}{\mathcal{S}_n}

\newcommand{\norm}[1]{\left\lVert#1\right\rVert}
\newcommand{\inpro}[2]{\left\langle#1,#2\right\rangle}

\renewcommand{\bibname}{References}

\maketitle

\addtocounter{section}{-1}


\section{Introduction}


Locally trivial bundles play a crucial role in classical Geometry and Topology, and not surprisingly a great deal of effort has been extended by 
numerous researchers with the aim of generalizing these classical concepts to the realm of Noncommutative Geometry. Those efforts have been 
very successful with regard to compact principal  bundles and associated vector bundles, equipped with a quantum structure group. See for example 
\cite{schneider},  \cite{brzmaj} and \cite{brzhaj} for development of the purely algebraic theory, and \cite{decyam} for an excellent treatment of 
freeness of actions of compact quantum groups on $C^*$-algebras. 
In the noncommutative case, a principal bundle at the $C^*$-algebraic level consists of a unital $C^*$-algebra $A$ which plays the role of the total space,  
equipped with a free action of a compact quantum structure group $G$. The corresponding fixed point algebra $A^G$ plays the role of the base space. 
However, the general concept of a noncommutative fibre bundle is far from understood. An obvious obstruction in imitating the classical approach 
is lack of transition functions subordinated to a local trivialisation, and thus lack of a clear path from the fibration to a group. 

Recently, a general framework for understanding a large class of noncommutative, locally trivial fibre bundles has been proposed in 
\cite{BrzSz:ncb}. The motivation for that work comes from the following classical case. Given a compact principal bundle $G\to M\to B$ and 
a closed subgroup $H\subseteq G$, one may pass to a bundle $G/H\to M/H\to B$. In \cite{BrzSz:ncb}, the discussion is carried out 
primarily on the purely algebraic level and at the high level of generality allowing for $G$ to be replaced by a coalgebra. However, it is important 
to keep $C^*$-algebraic perspective as well and indeed, the most interesting examples described so far involve both purely algebraic as well 
as analytic components based on $C^*$-algebras. Here we should mention the quantum flag manifold $SU_q(3)/{\mathbb T}^2$ with fibres 
standard Podle\'{s} spheres studied in \cite{BrzSz:BiaProc}  and \cite{BrzSz:flag}, as well as the same quantum manifold but with  
fibres generic Podle\'{s} spheres given in \cite{BrzSz:ncb}. 

Yet another example briefly described in \cite{BrzSz:ncb} on the purely algebraic level is a quantum twistor bundle $\C P_q^1 \to \C P_q^3 \to S^4_q$, 
derived from the principal quantum instanton bundle $SU_q(2) \to S_q^7 \to S^4_q$ constructed by Bonechi, Ciccoli, D\c{a}browski and Tarlini, 
\cite{bcdt} and \cite{bct1}. 
It should be noted that the action of $SU_q(2)$ in this case is not an algebra homomorphism, which makes the construction 
difficult and interesting at the same time.  The main objective of the present paper is to provide a thorough analysis of $C^*$-algebraic aspects of the quantum twistor bundle. We should point out that the $C^*$-algebra of the total space of our twistor 
bundle is given by generators satisfying quite different 
relations from those of the Vaksman-Soibelman complex projective space. Nevertheless, we show that on the level of $C^*$-algebras these two 
are isomorphic. 

The structure of the paper is as follows: In Section \ref{preliminaries} we recall definitions on graph $C^*$-algebras and freeness of actions on $C^*$-algebras. We then recall, in Section \ref{QuantumInstantonBundle}, the structure of the quantum instanton bundle due to Bonechi, Ciccoli,  D\c{a}browski and Tarlini. Section \ref{TwistorBundle} contains a detailed analysis of the enveloping $C^*$-algebra  of the total space for the quantum twistor bundle from \cite{BrzSz:ncb}. Moreover, we end the section by showing that the circle action defining the total space of the quantum twistor bundle is free. Finally, in Section \ref{projections} we take a look at the $K$-theory of the total space of the bundle and provide a natural construction of polynomial projectors. 
\vspace{0.2cm}

\textbf{Acknowledgment}. We are grateful to Francesco D'Andrea and Giovanni Landi for useful comments on an earlier version of this paper. 


\section{Preliminaries}\label{preliminaries}
\subsection*{Graph $C^*$-algebras} A directed graph $E=(E^0,E^1,r,s)$ consists of a countable set $E^0$ of \textit{vertices}, a countable set $E^1$ of \textit{edges} and two maps $r,s: E^1\to E^0$ called the \textit{range map} and the \textit{source map} respectively. For an edge $e\in E^1$ from $v$ to $w$ we have $s(e)=v$ and $r(e)=w$. 

Let $E=(E^0,E^1,r,s)$ be a directed graph. The graph $C^*$-algebra $C^*(E)$ is defined (see e.g. \cite{bpis,flr00}) as the universal $C^*$-algebra generated by families of orthogonal projections $\{P_v | \ v\in E^0\}$ and partial isometries $\{S_e | \ e\in E^1\}$ with mutually orthogonal ranges (i.e. $S_e^*S_f=0, e\neq f$) 
subject to the Cuntz-Krieger relations 
\begin{itemize}
\item[] (CK1) $S_e^*S_e=P_{r(e)}$
\item[] (CK2) $S_eS_e^*\leq P_{s(e)}$
\item[] (CK3) $P_v=\underset{s(e)=v}{\sum}S_e S_e^*$, if $\{e\in E^1| \ s(e)=v\}$ is finite and nonempty. 
\end{itemize}
A \textit{path} $\alpha$ in a graph is a finite sequence $\alpha=e_1e_2\cdots e_n$ of edges satisfying $r(e_i)=s(e_{i+1})$ for $i=1,...,n-1$ and we let $S_\alpha=S_{e_1}S_{e_2}\cdots S_{e_n}$.

A graph $C^*$-algebra $C^*(E)$ admits by universality a circle action, called the \textit{gauge action}, $\gamma: U(1)\to \text{Aut}(C^*(E))$ for which 
$\gamma_z(P_v)=P_v \ \text{and} \ \gamma_z(S_e)=zS_e$
for all $v\in E^0, e\in E^1$ and $z\in U(1)$.

In \cite{vs} Vaksman and Soibelman defined a quantum version of the odd sphere denoted $S_q^{2n+1}$ with $q\in (0,1)$ (see \eqref{VS} for the definition). We refer to the $C^*$-algebra $C(S_q^{2n+1})$ as the \textit{space of continuous functions} on the virtual space $S_q^{2n+1}$. Hence, the notation of continuous functions must be understood in an abstract sense where there is no underlying space. In \cite{hs} it is shown for $q\in [0,1)$ that $C(S_q^{2n+1})$ is isomorphic to $C^*(L_{2n+1})$ where the graph $L_{2n+1}$ has vertices $v_i, i=0,...,n$ and edges $e_{ij}, 0\leq i\leq j\leq n$ such that $s(e_{ij})=v_i$ and  $r(e_{ij})=v_j$. In particular this result shows that $C(S_q^{2n+1})$ are all isomorphic for $q\in [0,1)$.  

The complex projective space $C(\CP_q^n)$ is defined as the fixed point algebra under the circle action on $C(S^{2n+1}_q)$  given on generators by $z_i\mapsto wz_i, w\in U(1)$. Under the isomorphism between $C(S_q^{2n+1})$ and $C^*(L_{2n+1})$, the $U(1)$-action on $C(S_q^{2n+1})$ becomes the gauge action, from which $C(\CP_q^n)$ can be described as a graph $C^*$-algebra (see \cite[Section 4.3]{hs}). 

We will in the present paper consider a $C^*$-algebra, denoted $C(\CP_q^{3,\mu})$, defined as the fixed point algebra of another circle action on the quantum 7-sphere. Even though the action is different from the gauge action, we will in Section \ref{TheCalgebra} prove that $C(\CP_q^{3,\mu})\cong C(\CP_q^3)$. 

\subsection*{Free actions on a $C^*$-algebra}
The notion of a free action of a compact group on a topological space has been translated into the noncommutative setting by considering compact quantum groups instead of compact groups. The description has been formulated in two different ways. The first is due to Rieffel \cite{R90} and the second is given by Ellwood  \cite{E00}.  In \cite{decyam} it was shown that these two notions of freeness are equivalent. We also refer to the paper \cite{BDH17} for a good treatment of freenes. 


A free action of a compact quantum group on a $C^*$-algebra is defined as follows.
\begin{definition}[\cite{E00}]\label{Free}
The coaction $\rho: A\to A\otimes C(G)$ of a compact quantum group on a $C^*$-algebra is \textit{free} if the image of the linear map
$$
\Phi: A\otimes A\to A\otimes C(G), \ a_1\otimes a_2\mapsto (a_1\otimes 1)\rho(a_2)
$$
is dense in $A\otimes C(G)$. 
\end{definition} 
By letting $A=C(X)$ and dualising the action, the definition is in the classical case equivalent to an action of a compact group $G$ on a topological space $X$ being free. 

The action of a compact group on a $C^*$-algebra is free if the corresponding coaction is free in the sense of Definition \ref{Free}.  E.g. Let $u$ be the canonical generator of $C(S^1)$ then the coaction $\hat{\gamma}: C^*(E)\to C^*(E)\otimes C(S^1) $ corresponding to the gauge action is given by 
$$
\hat{\gamma}(S_e)= S_e\otimes u, \ \ \hat{\gamma}(P_v)=P_v\otimes 1. 
$$
In \cite[Proposition 2]{S01} a condition on the graph $E$, which guarantees that the gauge action on $C^*(E)$ is free, is given. Since the graph $L_{2n+1}$ satisfies the condition, we obtain that the circle action on the quantum sphere by Vaksman and Soibelman, defining the quantum complex projective space, is free. We will in Section \ref{Freenes} prove that the new circle action on $C(S_q^7)$, defining the main object of interest $C(\CP_q^{3,\mu})$, is also a free action.

\section{The quantum instanton bundle} \label{QuantumInstantonBundle}


To begin with, we briefly recall the structure of a quantum analogue of the instanton bundle $SU(2) \to S^7 \to S^4$ due to Bonechi, Ciccoli,  D\c{a}browski and Tarlini,
as given in \cite{bct1} and \cite{bcdt}. In Section \ref{TwistorBundle} we will construct a quantum version of the twistor bundle from this particular quantum instanton bundle.  We remark that in the litterature there exists other constructions of quantum instanton bundles, for example the one described by Pflaum in \cite{P94} and the one by Landi, Pagani and Reina in \cite{lpr}. The bundle from \cite{lpr} has the advantage over the one from \cite{bct1} 
insofar as the action of $SU_q(2)$ is by 
an honest $*$-homomorphism. Since the enveloping $C^*$-algebra of the total space of this bundle is not completely understood,  we will not use this bundle to define our quantum twistor bundle. 

Let $\mathcal{O}(U_q(4))$ be the polynomial algebra of the quantum unitary group, with $q\in(0,1)$. 
This is a universal algebra generated by $\{t_{ij}\}^4_{i,j=1}$ and $D_q^{-1}$, subject to the following relations: 
$$
\begin{aligned}
&t_{ik}t_{jk}=qt_{jk}t_{ik}, \ \  t_{ki}t_{kj}=qt_{kj}t_{ki}, \ \  i<j \\ 
&t_{il}t_{jk}=t_{jk}t_{il},\ \  i<j, k<l, \\
&t_{ik}t_{jl}-t_{jl}t_{ik}=(q-q^{-1})t_{jk}t_{il}, \ \ i<j, k<l, \\ 
&D_qD_q^{-1}=D_q^{-1}D_q=1. 
\end{aligned}
$$
Here $D_q$ is the quantum determinant, defined as 
$$
D_q=\sum_{\sigma\in S_4}(-q)^{I(\sigma)} t_{\sigma(1)1}\cdots t_{\sigma(4)4}, 
$$
where $I(\sigma)$ denotes the number of inversed pairs and $S_4$ is the group of permutations of 4 symbols. This algebra equipped with the usual 
comultiplication $\Delta_{U_q(4)}$, counit $\varepsilon$ and antipode $S_{U_q(4)}$, see \cite[p. 311--314]{ks}, is a Hopf algebra: 
$$ \begin{aligned}
\Delta_{U_q(4)}(t_{ij}) & = \sum_{k}t_{ik}\otimes t_{kj},  \\
\varepsilon(t_{ij}) & = \delta_{ij}, \\
S_{U_q(4)}(t_{ij}) & = (-q)^{i-j}\sum_{\sigma\in S_3} (-q)^{I(\sigma)} t_{j_{\sigma(1)}i_1} t_{j_{\sigma(2)}i_2} t_{j_{\sigma(3)}i_3}. 
\end{aligned} $$
Here $\{j_1,j_2,j_3\}=\{1,\ldots,j-1,j+1,\ldots,4\}$ and $\{i_1,i_2,i_3\}=\{1,\ldots,i-1,i+1,\ldots,4\}$. 
Define the adjoint operation on $\pol(U_q(4))$ by 
$$ 
t_{ij}^* = S_{U_q(4)}(t_{ji}) \;\;\; \text{and} \;\;\; D_q^*=D_q^{-1}. 
$$
Then $\pol(U_q(4))$ is a Hopf $*$-algebra and its enveloping $C^*$-algebra , $C(U_q(4))$, is the $C^*$-algebra of continuous functions on the compact quantum group $U_q(4)$. Note that continuous functions on $U_q(4)$  must be understood in an abstract sense since there is no underlying space.

Now let $\mathcal{O}(S_q^7)$ be the $*$-subalgebra of $\pol(U_q(4))$ generated by $\{z_i=t_{4i} \mid i=1,\ldots,4\}$. Its enveloping 
$C^*$-algebra, $C(S_q^7)$, is the universal $C^*$-algebra with the following relations: 
\begin{equation}\label{VS}
\begin{aligned}
&z_iz_j=qz_jz_i, \;\;\; \text{for} \; i<j, \\ 
& z_j^*z_i=qz_iz_j^*, \;\;\; \text{for} \; i\neq j, \\
&z_k^*z_k=z_kz_k^*+(1-q^2)\sum_{j<k} z_jz_j^*, \\ 
& \sum_{k=1}^4 z_kz_k^*=1.  
\end{aligned} 
\end{equation}
Thus, $C(S_q^7)$ is the $C^*$-algebra of 'continuous functions' on the Vaksman-Soibelman quantum $7$-sphere, \cite{vs}. Note that the generator $z_i$ from \cite{bct1} and \cite{bcdt}
corresponds to the generator $z_{5-i}$ from \cite{hs}.  


A certain  coalgebra surjection $\pi_{SU_q(2)}:\pol(U_q(4))\to\pol(SU_q(2))$ is investigated in \cite{bct1}. 
This map is not an algebra homomorphism but only a right 
$\pol(U_q(4))$-module map in the sense that
$$ \pi_{SU_q(2)}(a) = \pi_{SU_q(2)}(b) \;\; \text{implies} \;\; \pi_{SU_q(2)}(ac) = \pi_{SU_q(2)}(bc) $$
for all $a,b,c\in\pol(U_q(4))$. One has that  
$$ \rho_{U_q(4)} = (\id \otimes \pi_{SU_q(2)})\Delta_{U_q(4)} $$
is a right $\mathcal{O}(SU_q(2))$-coaction on $\mathcal{O}(U_q(4))$. Its restriction yields a right $\mathcal{O}(SU_q(2))$-coaction 
on $\mathcal{O}(S_q^7)$, namely  
$$ \rho_{S_q^7} : \mathcal{O}(S_q^7) \to \mathcal{O}(S_q^7)\otimes \mathcal{O}(SU_q(2)). $$
Note that $\rho_{S_q^7}$ is not a $*$-homomorphism. 


Now, let $\pol(S_q^4)$ be the vector space of coinvariants with respect to $\rho_{S_q^7}$, i.e. 
$$
\mathcal{O}(S_q^4)=\{u\in \mathcal{O}(S_q^7) \mid \rho_{S_q^7}(u)=u\otimes1\}.
$$
It is  shown in \cite{bct1} that $\mathcal{O}(S_q^4)$ is a $*$-algebra generated by $\{a,b,R\}$, where 
$$ a= z_1z_4^* - z_2z_3^*, \;\; b= z_1z_3 + q^{-1}z_2z_4, \;\; R=z_1z_1^* + z_2z_2^* $$
satisfies the following relations:
$$
\begin{aligned}
&Ra=q^{-2}aR, \ Rb=q^2bR, \ ab=q^3ba, \ ab^*=q^{-1}b^*a, \\
&aa^*+q^2bb^*=R(1-q^2R), \ aa^*=q^2a^*a+(1-q^2)R^2, \\
&b^*b=q^4bb^*+(1-q^2)R.  
\end{aligned}
$$
As shown in \cite{bct1}, the $C^*$-algebra $C(S_q^4)$ generated by $\pol(S_q^4)$ is isomorphic to the minimal unitisation of the compacts. 
On the other hand, it was shown in \cite{bcdt} that $\pol(S_q^4) \subseteq \pol(S_q^7)$ is a coalgebra Galois extension (see \cite{BH99} for more on Galois extensions). In this sense, 
a quantum instanton bundle 
$$ SU_q(2) \to S_q^7 \to S_q^4 $$
has been created as a coalgebra Galois extension. 


\section{The quantum twistor bundle}\label{TwistorBundle}
We take as our starting point the quantum instanton bundle described in Section \ref{QuantumInstantonBundle}. Classically the twistor bundle $\CP^1\to \CP^3\to S^4$ is obtained as the quotient of the instanton bundle $SU(2) \to S^7\to S^4$ by the subgroup $U(1)$ of $SU(2)$. Motivated by the classical setting, we  construct a quantum version of the twistor bundle, at the $C^*$-algebraic level, by passing to the fixed point algebra for the circle action $\mu$ on $C(S_q^7)$, defined on the generators by 
\begin{equation}\label{newaction}
\mu_w(z_j) = 
\begin{cases}
wz_j, & j=1,4 \\
\overline{w}z_j, & j=2,3
\end{cases}
\end{equation}
for all $w\in U(1)$. First we remark that the action is defined in this way such that $C(S_q^4)$ is invariant under the action. Also note that by taking the quotient of $C(S_q^7)$ with the ideal generated by $z_1$ and $z_2$ gives us the $C^*$-algebra generated by $z_3, z_4$, which is isomorphic to $C(SU_q(2))=C^*(\alpha,\gamma)$ of Woronowicz, \cite{w1}, by the isomorphism $z_4\mapsto\alpha^*, z_3\mapsto \gamma$. 
The action $\mu$ restricted to $C(SU_q(2))$ is then given on the generators by 
$$
\mu_w(\alpha)=\overline{w}\alpha, \ \mu_w(\gamma)=\overline{w}\gamma.
$$
Hence $C(SU_q(2))^\mu$, serving as the base space of the bundle, is the Podlés sphere $C(\CP_q^1)$, \cite{P87}, which is isomorphic to the minimal unitisation of the compacts. 

Let $C(\C P_q^{3,\mu})$ be the fixed-point algebra of $C(S_q^7)$ for the  action $\mu$, and let $\pol(\C P_q^{3,\mu})$ be the intersection of $C(\C P_q^{3,\mu})$ with $\pol(S_q^7)$. We have $C(S_q^4) \subseteq C(\C P_q^{3,\mu})$ and $\pol(S_q^4) \subseteq \pol(\C P_q^{3,\mu})$. In Section \ref{TheCalgebra} we will analyse the $C^*$-algebra $C(\C P_q^{3,\mu})$ in details. 

By passing to the polynomial dense $*$-subalgebras of the above $C^*$-algebras we indeed obtain the quantum twistor bundle as described at the algebraic level in \cite{BrzSz:ncb} (see \cite[Proposition 3.5.3]{M21}).

\subsection{The $C^*$-algebra of the total space}\label{TheCalgebra}

The enveloping $C^*$-algebra of the fibre, $C(S_q^4)$, and the base space, $C(\CP_q^1)$, of the quantum twistor bundle are both known to be isomorphic to the minimal unitisation of the compacts. In this section we show (see Theorem \ref{TheCalgebra}) that the enveloping $C^*$-algebra of the total space $C(\CP_q^{3,\mu})$ is a well known graph $C^*$-algebra. 

Let $J_k$ be the closed 2-sided ideal of $C(\C P_q^{3,\mu})$ generated by $z_1z_1^*,...,z_kz_k^*$ for $k=1,2,3$. We clearly have that 
$C(\C P_q^{3,\mu})/J_3$ is the space of complex numbers. The quotient $C(\C P_q^{3,\mu})/J_2$ can be obtained by taking the quotient of $C(S_q^7)$ with the ideal generated by $z_1$ and $z_2$. This gives us the $C^*$-algebra generated by $z_3, z_4$, which is isomorphic to $C(SU_q(2))$.  The quotient $C(\C P_q^{3,\mu})/J_2$ is then the fixed point algebra
of the circle action $\mu$ on $C(SU_q(2))$ which is the Podlés sphere. 

We return now to the investigation of $C(\C P_q^{3,\mu})/J_1$. Consider the $C^*$-algebra generated by $z_2,z_3,z_4$, which is isomorphic to $C(S_q^5)$ 
of \cite{hs} by the renaming of the generators $z_2\mapsto z_3$, $z_2\mapsto z_2$ and $z_4\mapsto z_1$. It is shown in \cite[Theorem 4.4]{hs} that  
$C(S_q^5)$ is the graph $C^*$-algebra $C^*(L_5)$, corresponding to the following graph:
\begin{center}
\begin{tikzpicture}[scale=1.5]
\draw (0,0) -- (4,0);
\filldraw [black] (0,0) circle (1pt);
\filldraw [black] (2,0) circle (1pt);
\filldraw [black] (4,0) circle (1pt);
\draw [->] (0,0) -- (1,0);
\draw [->] (2,0) -- (3,0);

\draw[] (0,0) to [out=-20,in=-160] (4,0);
\draw [->] (1.99,-0.4) -- (2,-0.4);

\draw (0,0.3) circle [radius=0.3cm];
\draw (2,0.3) circle [radius=0.3cm];
\draw (4,0.3) circle [radius=0.3cm];
\draw [->] (-0.01,0.6) -- (0.01,0.6);
\draw [->] (1.99,0.6) -- (2.01,0.6);
\draw [->] (3.99,0.6) -- (4.01,0.6);

\node at (-1, 0)  {$L_5$};
\node at (0, 0.2)  {$v_1$};
\node at (2, 0.2)  {$v_2$};
\node at (4, 0.2)  {$v_3$};
\node at (1, 0.2)  {$e_{12}$};
\node at (3, 0.2)  {$e_{23}$};
\node at (2, -0.25)  {$e_{13}$};
\node at (0, 0.75)  {$e_{11}$};
\node at (2, 0.75)  {$e_{22}$};
\node at (4, 0.75)  {$e_{33}$};

\end{tikzpicture}
\end{center}

Under the isomorphism of $C(S_q^5)$ and $C^*(L_5)$, the action $\mu$ on $C^*(L_5)$ becomes 
$$
\mu_w(S_{e_{ij}})=\begin{cases}
wS_{e_{ij}} & i=1 \\
\overline{w}S_{e_{ij}} & i=2,3
\end{cases}, \ \ \ \mu_w(P_{v_i})=P_{v_i}, i=1,2,3
$$
for all $w\in U(1)$. Hence, $C(\C P_q^{3,\mu})/J_1$ is isomorphic to the fixed point algebra $C^*(L_5)^{\mu}$. 

\begin{theorem}
Let $F$ be the graph 
\begin{center}
\begin{tikzpicture}[scale=1.5]
\draw (0,0) -- (4,0);
\filldraw [black] (0,0) circle (1pt);
\filldraw [black] (2,0) circle (1pt);
\filldraw [black] (4,0) circle (1pt);
\draw [->] (0,0) -- (1,0);
\draw [->] (2,0) -- (3,0);

\draw[] (0,0) to [out=-20,in=-160] (4,0);
\draw [->] (1.99,-0.4) -- (2,-0.4);

\node at (-1, 0)  {$F$};
\node at (1, 0.2)  {$(\infty)$};
\node at (3, 0.2)  {$(\infty)$};
\node at (2, -0.6)  {$(\infty)$};
\end{tikzpicture}
\end{center}
with countably infinitely many edges from $w_i$ to $w_j$ whenever $i<j$. There exists a $C^*$-algebra isomorphism 
$$
\phi:C^*(F) \to C^*(L_5)^\mu  \cong C(\C P_q^{3,\mu})/J_1
$$
such that 
$$
\begin{aligned}
P_{w_j}&\mapsto P_{v_j}, \;\; j=1,2,3, \\
S_{\alpha_n}&\mapsto S_{e_{11}}^nS_{e_{12}}S_{e_{22}}^{n+1}, \;\; n\geq 0 \\
S_{\beta_{n,m}}&\mapsto S_{e_{11}}^nS_{e_{12}}S_{e_{22}}^mS_{e_{23}}S_{e_{33}}^{n-m}, \;\; n\geq m\geq 0,  \\
S_{\delta_n}&\mapsto S_{e_{11}}^nS_{e_{13}}S_{e_{33}}^{n+1}, \;\; n\geq 0 \\
S_{\gamma_n}&\mapsto S_{e_{22}}^nS_{e_{23}}{S_{e_{33}}^*}^{n+1}, \;\; n\geq 0,  
\end{aligned}
$$
where $\alpha_n, \beta_{n,m}, \delta_n, \gamma_n$ are the following edges: 
\begin{center}
\begin{tikzpicture}[scale=1.5]
\draw (0,0) -- (4,0);
\filldraw [black] (0,0) circle (1pt);
\filldraw [black] (2,0) circle (1pt);
\filldraw [black] (4,0) circle (1pt);
\node at (0,0.2) {$v_1$};
\node at (2,0.2) {$v_2$};
\node at (4,0.2) {$v_3$};
\draw [->] (0,0) -- (1,0);
\draw [->] (2,0) -- (3,0);

\draw[] (0,0) to [out=-20,in=-160] (4,0);
\draw [->] (1.99,-0.4) -- (2,-0.4);

\node at (-1, 0)  {$F$};
\node at (1, 0.2)  {$\alpha_n$};
\node at (3, 0.2)  {$\gamma_n$};
\node at (2, -0.6)  {$\beta_{n,m},\delta_n$};
\end{tikzpicture}
\end{center}
\end{theorem}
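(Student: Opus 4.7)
The plan is to build $\phi$ via the universal property of $C^*(F)$, prove injectivity via the Cuntz--Krieger uniqueness theorem, and establish surjectivity by realising a spanning set of $C^*(L_5)^\mu$ inside the image of $\phi$.

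The first step is to verify that the prescribed formulas satisfy the Cuntz--Krieger relations defining $C^*(F)$. The vertex images $P_{v_1}, P_{v_2}, P_{v_3}$ are mutually orthogonal projections, each $\phi(S_e)$ is a product of partial isometries $S_{e_{ij}}^{\pm 1}$ and hence a partial isometry, and a direct check of $\mu$-weights shows all images lie in $C^*(L_5)^\mu$. The identities $\phi(S_e)^*\phi(S_e) = \phi(P_{r(e)})$ reduce to repeated application of (CK1) together with the crucial equality $S_{e_{33}}S_{e_{33}}^* = P_{v_3}$, obtained from (CK3) at $v_3$ because $e_{33}$ is the unique edge with source $v_3$; the same equality is what makes $\phi(S_{\gamma_n})$ a partial isometry in the first place. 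The orthogonality relations $\phi(S_e)^*\phi(S_f) = 0$ for distinct edges $e,f$ of $F$ reduce in every case to $S_{e_{ij}}^* S_{e_{kl}} = 0$ for $(i,j) \neq (k,l)$, by matching initial (or final) edge-segments of the prescribed paths. The relation (CK3) for $F$ itself imposes no constraint, since $w_1$ and $w_2$ are infinite emitters and $w_3$ has no outgoing edge. Universality of $C^*(F)$ then yields the desired $*$-homomorphism $\phi\colon C^*(F) \to C^*(L_5)^\mu$.

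For injectivity, I would use that $F$ has no cycles at all, since every edge runs $w_i \to w_j$ with $i<j$. Hence condition (L) is vacuously satisfied and the Cuntz--Krieger uniqueness theorem applies, giving that $\phi$ is injective as soon as each $\phi(P_{w_j}) = P_{v_j}$ is nonzero in $C^*(L_5)^\mu$. This is clear because $P_{v_j}$ is a nonzero $\mu$-invariant projection in $C^*(L_5)$.

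Surjectivity is the main technical step. Integration over $\mu$ yields a faithful conditional expectation $C^*(L_5) \to C^*(L_5)^\mu$, so $C^*(L_5)^\mu$ is the closed span of all elements $S_\alpha S_\beta^*$ where $\alpha,\beta$ are paths in $L_5$ sharing an endpoint and having equal $\mu$-weights. A case analysis on the common endpoint $v_j$ and on the path type shows that each such spanning element is a polynomial in the $\phi(S_e)$, $\phi(P_{w_j})$ and their adjoints. The recurring ingredient is once more $S_{e_{33}}S_{e_{33}}^* = P_{v_3}$, which lets one insert or cancel $S_{e_{33}}$ factors at tails ending in $v_3$ and thereby match arbitrary path endings with the format of $\phi(S_{\gamma_n})$ and $\phi(S_{\beta_{n,m}})$; the relations (CK3) at $v_1$ and $v_2$ additionally let one express $S_{e_{ii}}^n S_{e_{ii}}^{*n}$ as combinations of $\phi$-images and vertex projections. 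The combinatorial bookkeeping in this case analysis is the hardest part of the argument, but no genuinely new ingredient beyond those above is required.
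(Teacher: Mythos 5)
Your proposal is correct and follows essentially the same route as the paper: universality of $C^*(F)$ after checking the Cuntz--Krieger relations (with $P_{v_3}=S_{e_{33}}S_{e_{33}}^*$ doing the work for $\gamma_n$), the Cuntz--Krieger uniqueness theorem for injectivity since $F$ has no cycles, and a spanning argument for surjectivity. Your derivation of the spanning set of $C^*(L_5)^\mu$ from the faithful conditional expectation given by averaging over $\mu$ is a cleaner justification than the paper's bare listing of generators, but both accounts leave the same combinatorial case analysis to the reader.
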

\begin{proof}
The target elements belong to the fixed point algebra and they satisfy the Cuntz-Krieger relations for the graph $F$. By universality of $C^*(F)$, there exists a $*$-homomorphism $\phi: C^*(F) \to C^*(L_5)$ as above. Injectivity of $\phi$ follows from the Cuntz-Krieger uniqueness theorem, \cite[Theorem 2]{flr00}, 
since the graph $F$ contains no loops and $\phi(P_{w_i})\neq 0$ for all $i$. It can be seen that $C^*(L_5)^{\mu}$ is generated by the following 
elements, with $n,m\geq 0$:
$$
\begin{aligned}
&S_{e_{11}}^n{S_{e_{11}}^n}^*, &&S_{e_{11}}^nS_{e_{12}}S_{e_{22}}^{n+1}, &&&S_{e_{22}}^nS_{e_{23}}S_{e_{33}}^m (S_{e_{22}}^mS_{e_{23}}S_{e_{33}}^n )^*,
&&&&S_{e_{11}}^nS_{e_{12}}S_{e_{22}}^mS_{e_{23}}S_{e_{33}}^{n-m}, m\leq n
 \\
&S_{e_{22}}^n{S_{e_{22}}^n}^*, &&S_{e_{11}}^nS_{e_{13}}S_{e_{33}}^{n+1}, &&&S_{e_{11}}^nS_{e_{12}}S_{e_{22}}^n(S_{e_{11}}^mS_{e_{12}}S_{e_{22}}^m)^*,
&&&&S_{e_{11}}^nS_{e_{13}}S_{e_{33}}^m(S_{e_{11}}^nS_{e_{13}}S_{e_{33}}^m)^*,
\\
&S_{e_{33}}^n{S_{e_{33}}^n}^*, &&S_{e_{22}}^nS_{e_{23}}{S_{e_{33}}^*}^{n+1}, &&&S_{e_{11}}^nS_{e_{12}}S_{e_{22}}^m(S_{e_{11}}^nS_{e_{12}}S_{e_{22}}^m)^*,
&&&&S_{e_{11}}^nS_{e_{13}}S_{e_{33}}^n(S_{e_{11}}^mS_{e_{13}}S_{e_{33}}^m)^*
\end{aligned}
$$

$$ S_{e_{11}}^kS_{e_{22}}^nS_{e_{23}}S_{e_{33}}^m(S_{e_{11}}^lS_{e_{22}}^sS_{e_{23}}S_{e_{33}}^t)^*,   \text{ when  }\, k+s+t=l+n+m.
$$
It can be shown that all these generators can be expressed by the images under $\phi$ of the generators for $C^*(F)$. Thus $\phi$ is surjective. 
\end{proof}
Note that $C(\C P_q^{3,\mu})/J_1$ is then isomorphic to $C(\CP_q^2)$ by \cite{hs}. 

\begin{theorem}\label{ideal}
The ideal $J_1$ in $C(\C P_q^{3,\mu})$ generated by $z_1z_1^*$ is isomorphic to the space of compact operators $\mathcal{K}$ on a separable Hilbert space. 
\end{theorem}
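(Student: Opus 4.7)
The plan is to establish $J_1 \cong \mathcal{K}$ by analyzing $J_1$ through the Vaksman-Soibelman principal representation $\pi$ of $C(S_q^7)$ on $\ell^2(\mathbb{N}^3)$. This representation is faithful, hence restricts faithfully to both $C(\C P_q^{3,\mu})$ and the ideal $J_1 \subset C(\C P_q^{3,\mu})$. A crucial starting observation is that $z_1$ is normal in $C(S_q^7)$: the $k=1$ instance of the relation $z_k^* z_k = z_k z_k^* + (1-q^2)\sum_{j<k} z_j z_j^*$ has empty sum on the right, so $z_1^* z_1 = z_1 z_1^*$. Under $\pi$, the normal element $z_1$ is realized as a positive diagonal operator on $\ell^2(\mathbb{N}^3)$ with discrete spectrum accumulating only at $0$, whence $\pi(z_1 z_1^*)$ is a positive compact operator. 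Since $\pi(J_1)$ is the closed two-sided ideal in $\pi(C(\C P_q^{3,\mu}))$ generated by this compact element, one has $\pi(J_1) \subseteq \mathcal{K}(\ell^2(\mathbb{N}^3))$ immediately.

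For the reverse inclusion I would exhibit a complete system of matrix units inside $\pi(J_1)$. Continuous functional calculus applied to $\pi(z_1 z_1^*)$ produces the spectral projections onto its finite-dimensional eigenspaces, in particular the rank-one projection onto the top eigenvector. To connect different basis vectors I would employ the $\mu$-invariant ``mixed shifts'' $z_1 z_2$, $z_1 z_3$ and $z_1 z_4^*$, which under $\pi$ act as weighted shifts in the three independent coordinate directions of $\ell^2(\mathbb{N}^3)$. Each of these elements lies in $J_1$: using the $q$-commutation relations one computes, for example, $(z_1 z_2)^*(z_1 z_2) = q^2\, z_2^* z_2 \cdot z_1 z_1^*$, which belongs to $J_1$ because $z_1 z_1^* \in J_1$ and $z_2^* z_2 \in C(\C P_q^{3,\mu})$; the standard fact that $x^* x$ in a closed ideal forces $x$ in the ideal then places $z_1 z_2 \in J_1$, and the cases of $z_1 z_3$ and $z_1 z_4^*$ are entirely analogous. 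Applying iterated products of these mixed shifts and their adjoints to the top rank-one spectral projection then yields every matrix unit in $\mathcal{K}(\ell^2(\mathbb{N}^3))$, so $\pi(J_1) \supseteq \mathcal{K}$.

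Combining $\pi(J_1) = \mathcal{K}$ with the faithfulness of $\pi$ on $J_1$ yields $J_1 \cong \mathcal{K}$, as required. The main obstacle will be the bookkeeping in the second step: one must check carefully that the three specific mixed shifts, together with their iterates and the spectral projections of $\pi(z_1 z_1^*)$, actually reach every basis vector of $\ell^2(\mathbb{N}^3)$ starting from the top one, and that the matrix units produced genuinely come from elements of $C(\C P_q^{3,\mu})$ rather than only from the larger ideal of $C(S_q^7)$ generated by $z_1$. A conceptually cleaner alternative would be to identify $C(\C P_q^{3,\mu})$ directly as a graph $C^*$-algebra extending the graph $F$ of the previous theorem by a single source vertex of infinite-emission type; then $J_1$ would correspond to the ideal associated with the saturated hereditary complement of that vertex, and $J_1 \cong \mathcal{K}$ would follow immediately from standard graph $C^*$-algebra theory.
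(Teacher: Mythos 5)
Your overall architecture --- realize $J_1$ in a concrete representation on $\ell^2(\mathbb{N}^3)$, show the image lands in the compacts, then produce enough matrix units to exhaust $\mathcal{K}$ --- is coherent, and several intermediate observations are correct: $z_1$ is indeed normal, the elements $z_1z_2$, $z_1z_3$, $z_1z_4^*$ are $\mu$-invariant, and your computation $(z_1z_2)^*(z_1z_2)=q^2\,z_2^*z_2\,z_1z_1^*$ together with the standard fact that $x^*x\in J$ forces $x\in J$ does place them in $J_1$. The genuine gap is in your very first step: an irreducible representation of $C(S_q^7)$ on $\ell^2(\mathbb{N}^3)$ is \emph{not} faithful. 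The generic irreducible representations of the Vaksman--Soibelman sphere form a circle's worth of mutually inequivalent representations, and one needs all of them (a direct sum or direct integral over $U(1)$) to separate points of $C(S_q^7)$; already the ideal of $C(S_q^7)$ generated by $z_1$ is of the form $\mathcal{K}\otimes C(U(1))$, on which a single such representation has a large kernel. Whether one fixed representation is faithful on the smaller ideal $J_1\subseteq C(\C P_q^{3,\mu})$ is precisely the question of whether $J_1$ admits a unique irreducible representation --- which is the substance of the theorem. Without that, your argument only yields a surjection $J_1\twoheadrightarrow\mathcal{K}$. If you instead take the honestly faithful representation, $\pi(z_1z_1^*)$ is no longer compact and the first half of the argument collapses. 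This is exactly where the paper invests its effort: it shows that \emph{every} irreducible representation of $C(\C P_q^{3,\mu})$ not vanishing on $J_1$ is unitarily equivalent to the standard one (by chasing the joint spectra of $z_iz_i^*$, $z_i^*z_i$ and building the eigenbasis $\xi_{n,m,k}$), and then concludes via Naimark--Rosenberg; your matrix-unit step corresponds only to the paper's closing paragraph (trace-class plus irreducibility gives $\rho(J_1)=\mathcal{K}(H)$).

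Your proposed fallback --- reading off $J_1$ from a graph presentation of $C(\C P_q^{3,\mu})$ --- is not available either: in the paper the isomorphism $C(\C P_q^{3,\mu})\cong C^*(G)$ is \emph{deduced from} this theorem (it enters the computation of $K_0$ and the AF property needed for Elliott's classification), so invoking it here is circular unless you independently exhibit a generating Cuntz--Krieger $G$-family inside $C(\C P_q^{3,\mu})$, which is substantial new work and nowhere sketched. To repair your proof along its own lines you would need to either (i) prove directly that $\ker(\pi|_{J_1})=0$, e.g.\ by showing any irreducible representation of $J_1$ is equivalent to $\pi|_{J_1}$ (at which point you have reproduced the paper's argument), or (ii) identify the $\mu$-fixed points of the ideal $\mathcal{K}\otimes C(U(1))\subseteq C(S_q^7)$ explicitly and locate $J_1$ inside them.
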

\begin{proof}
The strategy of the proof is to show that the ideal $J_1$ admits precisely one irreducible representation up to unitary equivalence. Then it follows by \cite{Na48, Ro53} that $J_1$ is isomorphic to the space of compact operators, since the only $C^*$-algebra which admits precisely one irreducible representation up to unitary equivalence is the space of compact operators. 

We assume that $q\in (0,1)$, the case $q>1$ can be done in a similar way.  
Let $\rho$ be an irreducible representation of $C(\C P_q^{3,\mu})$ such that $\rho|_{J_1}\neq 0$. Then  $\rho$ is also an irreducible representation of $J_1$,  \cite[Theorem 1.3.4]{a76}. We have 
\begin{equation}\label{z4}
z_4^*z_4=q^2z_4z_4^*+(1-q^2), 
\end{equation}
and hence $z_4z_4^*$ and $z_4^*z_4$ commute. 

Recall that the joint spectrum between two commuting elements $A$ and $B$ in a $C^*$-algebra, denoted $\sigma(A,B)$, is the collection of $(w(A),w(B))$ for all characters $w$ of the commutative $C^*$-algebra generated by $A$ and $B$.

By \eqref{z4}, the joint spectrum $\sigma(z_4z_4^*, z_4^*z_4)$ is contained in the straight line from the point $(1-q^2,0)$ to $(1,1)$. Hence the red line segment in Figure \ref{jointspectrum} is not in 
$\sigma(z_4^*z_4)$. Since $\sigma(z_4z_4^*) \cup\{0\} = \sigma(z_4^*z_4) \cup\{0\}$ \cite[Proposition 3.2.8]{kr1}, the green line segment is not in $\sigma(z_4z_4^*)$. 
Then the blue line segment cannot be in $\sigma(z_4^*z_4)$ and so on.

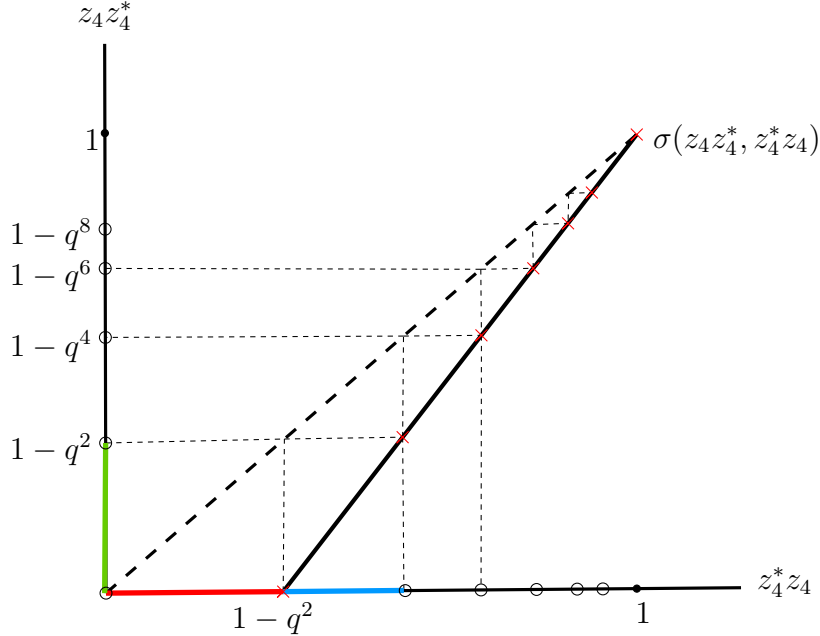
\begin{figure}[H]
\begin{tikzpicture}[scale=0.90]
\draw [line width=1.2pt] (-1.44,6.52)-- (-1.42,-1.6);
\draw [line width=1.2pt] (-1.42,-1.6)-- (7.96,-1.52);
\draw [line width=1.2pt,dash pattern=on 5pt off 5pt] (-1.42,-1.6)-- (6.42,5.18);
\draw [line width=1.6pt] (1.1971198468535476,-1.5776791484276884)-- (6.42,5.18);
\draw (-2.,7.32) node[anchor=north west] {$z_4z_4^*$};
\draw (8.04,-1.06) node[anchor=north west] {$z_4^*z_4$};
\draw (0.28,-1.58) node[anchor=north west] {$1-q^2$};
\draw (-1.9,5.42) node[anchor=north west] {1};
\draw (6.24,-1.58) node[anchor=north west] {1};
\draw [line width=0.4pt,dash pattern=on 2pt off 2pt] (1.1971198468535476,-1.5776791484276884)-- (1.2147115524883585,0.6784877966672278);
\draw [line width=0.4pt,dash pattern=on 2pt off 2pt] (-1.4254675831275745,0.6198387497952522)-- (1.2147115524883585,0.6784877966672278);
\draw [line width=2.pt,color=ffqqqq] (-1.42,-1.6)-- (1.1971198468535476,-1.5776791484276884);
\draw [line width=2.pt,color=wwccqq] (-1.4254675831275745,0.6198387497952522)-- (-1.44,-1.6);
\draw [line width=0.4pt,dash pattern=on 2pt off 2pt] (1.2147115524883585,0.6784877966672278)-- (2.961766958107331,0.7055283230033471);
\draw [line width=0.4pt,dash pattern=on 2pt off 2pt] (2.961766958107331,0.7055283230033471)-- (2.98,-1.56);
\draw [line width=2.pt,color=qqzzff] (1.1971198468535476,-1.5776791484276884)-- (2.98,-1.56);
\draw (6.5,5.5) node[anchor=north west] {$\sigma(z_4z_4^*,z_4^*z_4)$};
\draw [line width=0.4pt,dash pattern=on 2pt off 2pt] (2.961766958107331,0.7055283230033472)-- (2.9743815179552087,2.2002432004765704);
\draw [line width=0.4pt,dash pattern=on 2pt off 2pt] (2.97438151795521,2.2002432004765713)-- (-1.4293105310094214,2.1800755898251003);
\draw [line width=0.4pt,dash pattern=on 2pt off 2pt] (2.97438151795521,2.2002432004765713)-- (4.125485508256192,2.211217971317769);
\draw [line width=0.4pt,dash pattern=on 2pt off 2pt] (4.125485508256192,2.211217971317769)-- (4.119767612068535,-1.552752515035663);
\draw [line width=0.4pt,dash pattern=on 2pt off 2pt] (4.116119738630228,3.1876137535603246)-- (4.125485508256192,2.211217971317769);
\draw [line width=0.4pt,dash pattern=on 2pt off 2pt] (4.116119738630228,3.1876137535603246)-- (-1.4318228310391476,3.2000694018939924);
\draw [line width=0.4pt,dash pattern=on 2pt off 2pt] (4.116119738630228,3.1876137535603246)-- (4.891351513258352,3.202141864023753);
\draw [line width=0.4pt,dash pattern=on 2pt off 2pt] (4.891351513258352,3.202141864023753)-- (4.879029953273638,3.8473753932647017);
\draw [line width=0.4pt,dash pattern=on 2pt off 2pt] (4.879029953273638,3.8473753932647017)-- (5.405154539715735,3.866931385605704);
\draw [line width=0.4pt,dash pattern=on 2pt off 2pt] (5.4140245359941925,4.310036524750078)-- (5.405154539715735,3.866931385605704);
\draw [line width=0.4pt,dash pattern=on 2pt off 2pt] (5.4140245359941925,4.310036524750078)-- (5.757071735201507,4.32226319950434);
\draw (-3,2.44) node[anchor=north west] {$1-q^{4}$};
\draw (-3,3.48) node[anchor=north west] {$1-q^6$};
\draw (-3,4.1) node[anchor=north west] {$1-q^8$};
\draw (-3,0.92) node[anchor=north west] {$1-q^2$};
\begin{scriptsize}
\draw [fill=black] (-1.4367486668648421,5.199958747125949) circle (1.5pt);
\draw [fill=black] (6.420964828141123,-1.5331260995467708) circle (1.5pt);
\draw [color=black] (-1.4254675831275743,0.6198387497952522) circle (2.5pt);
\draw [color=black] (-1.4293105310094214,2.1800755898251003) circle (2.5pt);
\draw [color=black] (3.000190201702905,-1.562301149665647) circle (2.5pt);
\draw [color=ffqqqq] (2.961766958107331,0.7055283230033472)-- ++(-2.5pt,-2.5pt) -- ++(5.0pt,5.0pt) ++(-5.0pt,0) -- ++(5.0pt,-5.0pt);
\draw [color=ffqqqq] (1.1971198468535476,-1.5776791484276884)-- ++(-2.5pt,-2.5pt) -- ++(5.0pt,5.0pt) ++(-5.0pt,0) -- ++(5.0pt,-5.0pt);
\draw [color=ffqqqq] (4.125485508256192,2.211217971317769)-- ++(-2.5pt,-2.5pt) -- ++(5.0pt,5.0pt) ++(-5.0pt,0) -- ++(5.0pt,-5.0pt);
\draw [color=black] (4.119767612068535,-1.552752515035663) circle (2.5pt);
\draw [color=black] (-1.4318228310391476,3.2000694018939924) circle (2.5pt);
\draw [color=ffqqqq] (4.891351513258352,3.202141864023753)-- ++(-2.5pt,-2.5pt) -- ++(5.0pt,5.0pt) ++(-5.0pt,0) -- ++(5.0pt,-5.0pt);
\draw [color=ffqqqq] (5.405154539715735,3.866931385605704)-- ++(-2.5pt,-2.5pt) -- ++(5.0pt,5.0pt) ++(-5.0pt,0) -- ++(5.0pt,-5.0pt);
\draw [color=ffqqqq] (5.757071735201507,4.32226319950434)-- ++(-2.5pt,-2.5pt) -- ++(5.0pt,5.0pt) ++(-5.0pt,0) -- ++(5.0pt,-5.0pt);
\draw [color=black] (4.9398785327556975,-1.5457579656054952) circle (2.5pt);
\draw [color=black] (5.540346581688086,-1.5406367029280335) circle (2.5pt);
\draw [color=black] (5.9198072525764065,-1.53740036458357) circle (2.5pt);
\draw [color=black] (-1.4332512724691664,3.780016622481603) circle (2.5pt);
\draw [color=ffqqqq] (6.42,5.18)-- ++(-2.5pt,-2.5pt) -- ++(5.0pt,5.0pt) ++(-5.0pt,0) -- ++(5.0pt,-5.0pt);
\draw [color=black] (-1.42,-1.6) circle (2.5pt);
\end{scriptsize}
\end{tikzpicture}
\caption{Illustration of the joint spectrum $\sigma(z_4z_4^*, z_4^*z_4)$.}
\label{jointspectrum}
\end{figure}

We then obtain 
$$
\sigma(z_4^*z_4)\subseteq \{\lambda_n\mid n\geq 1\}\cup \{1\}, \ \ 
\sigma(z_4z_4^*)\subseteq \{\lambda_n\mid n\geq 0\}\cup \{1\},
$$
where $\lambda_n:=1-q^{2n}$.  In a similar way, we get 
$$
\begin{aligned}
&\sigma(z_3^*z_3)\subseteq \{\delta_{n,m}\mid n\geq 0, m\geq 1\}\cup \{q^{2n}\mid n\geq 1\}\cup \{0\} \\ 
&\sigma(z_3z_3^*)\subseteq \{\delta_{n,m}\mid n,m\geq 0\}\cup \{q^{2n}\mid n\geq 0\}, \\
& \sigma(z_2^*z_2)\subseteq \{\gamma_{n,m,k}\mid n,m\geq 0, k\geq 1\}\cup \{q^{2(m+n)}\mid n,m\geq 0\}\cup \{0\}, \\
&\sigma(z_2z_2^*)\subseteq \{\gamma_{n,m,k}\mid n,m,k \geq 0\} \cup \{q^{2(m+n)}\mid n,m\geq 0\}, \\ 
& \sigma(z_1z_1^*)\subseteq \{q^{2(n+m+k)}\mid n,m,k\geq 0\}\cup \{0\},
\end{aligned}
$$
where $\delta_{n,m}:=q^{2n}(1-q^{2m})$ and $\gamma_{n,m,k}:=(1-q^{2k})q^{2(n+m)}$. 

Let $H_{\lambda_n}$ denote the eigenspace of $\rho(z_4z_4^*)$ for the eigenvalue $\lambda_n$, $H_{\delta_{n,m}}$ the eigenspace of $\rho(z_3z_3^*)$ for the eigenvalue $\delta_{n,m}$, and $H_{\gamma_{n,m,k}}$ the eigenspace of $\rho(z_2z_2^*)$ for the eigenvalue $\gamma_{n,m,k}$. Since $\rho|_{J_1}\neq 0 $ there exists a vector $\xi_{0,0,0}$ such that $\rho(z_1z_1^*)\xi_{0,0,0}\neq 0 $. We will assume that this vector is in $H_{\gamma_{0,0,0}}$ and $\norm{\xi_{0,0,0}}=1$, then $\rho(z_1z_1^*)\xi_{0,0,0}=\xi_{0,0,0}$ and $\rho(z_iz_i^*)\xi_{0,0,0}=0, i=2,3,4$.
We can assume that $\xi_{0,0,0}\in H_{\gamma_{0,0,0}}$ since if $\xi\in H_{\gamma_{n,m,k}}$ is such that $\rho(z_1z_1^*)\xi\neq 0$ then 
$$
\rho({z_4^*}^n{z_3^*}^m{z_2^*}^k{z_1^*}^rz_1^t)\xi\in H_{\gamma_{0,0,0}},
$$
where $t,r$ are chosen so that $k+m+t=r+n$. We can then choose $\xi_{0,0,0}$ to be the above vector after normalization. Indeed, we have 
$$
\begin{aligned}
\rho(z_2z_2^*) & \rho({z_4^*}^n{z_3^*}^m{z_2^*}^k{z_1^*}^rz_1^t)\xi 
= q^{-2(n+m)}\rho({z_4^*}^n{z_3^*}^m z_2{z_2}^*{z_2^*}^k{z_1^*}^rz_1^t)\xi  \\
&= q^{-2(m+n)}\left(\rho({z_4^*}^n{z_3^*}^m {z_2}^*{z_2}{z_2^*}^k{z_1^*}^rz_1^t)
-(1-q^{2})\rho({z_4^*}^n{z_3^*}^m z_1{z_1}^*{z_2^*}^k{z_1^*}^rz_1^t)
\right)\xi \\
&\ \ \vdots 	\\
&= q^{-2(m+n)} \left((1-q^{2k})q^{2(n+m)}- q^{2(n+m+k)}(1-q^2)
(q^{-2k}+q^{-2(k-1)}+\cdots + q^{-2})\right) \\
& \ \ \ \cdot  \rho({z_4^*}^n{z_3^*}^m{z_2^*}^k{z_1^*}^rz_1^t)\xi \\
&= \left((1-q^{2k})-(1-q^2)q^{2k}\frac{q^{-2k}(1-q^{2k})}{1-q^2}\right) \rho({z_4^*}^n{z_3^*}^m{z_2^*}^k{z_1^*}^rz_1^t)\xi
=0. 
\end{aligned} 
$$
Hence $\rho({z_4^*}^n{z_3^*}^m{z_2^*}^k{z_1^*}^rz_1^t)\xi$ is an eigenvector for $\rho(z_2z_2^*)$ with eigenvalue $0$. Similarly, we can show that $\rho({z_4^*}^n{z_3^*}^m{z_2^*}^k{z_1^*}^rz_1^t)\xi$ is an eigenvector for $\rho(z_3z_3^*)$ and $\rho(z_4z_4^*)$ with eigenvalue $0$ for both. We also have 
$$
\begin{aligned} 
\rho(z_1z_1^*)\rho({z_4^*}^n{z_3^*}^m{z_2^*}^k{z_1^*}^rz_1^t)\xi &= q^{-2(n+m+k)} \rho(z_2z_2^*)\rho({z_4^*}^n{z_3^*}^m{z_2^*}^k{z_1^*}^rz_1^tz_1z_1^*)\xi \\
&= q^{-2(n+m+k)}q^{2(n+m+k)}\rho(z_2z_2^*)\rho({z_4^*}^n{z_3^*}^m{z_2^*}^k{z_1^*}^rz_1^t)\xi \\
&= \rho({z_4^*}^n{z_3^*}^m{z_2^*}^k{z_1^*}^rz_1^t)\xi. 
\end{aligned} 
$$
Hence $\rho({z_4^*}^n{z_3^*}^m{z_2^*}^k{z_1^*}^rz_1^t)\xi$ is an eigenvector for $\rho(z_1z_1^*)$ with eigenvalue $1$. We then conclude that 
$\rho({z_4^*}^n{z_3^*}^m{z_2^*}^k{z_1^*}^rz_1^t)\xi\in H_{\gamma_{0,0,0}}$. \\

Now, let 
$$
\xi_{n,m,k}=\rho(z_1^r{z_1^*}^tz_2^kz_3^mz_4^n)\xi_{0,0,0}, \ \text{ for }\, k+m+t=r+n. 
$$
Note that we need to determine $r$ and $t$ such that we can get all combinations of $n,m,k\in \Nb$. There is only one way to choose $r,k$ such that  $z_1^r{z_1^*}^tz_2^kz_3^mz_4^n\in C(\C P_q^{3,\mu})$, and that is why $\xi_{n,m,k}$ is not indexed by $r$ and $t$. 
We want to show that $\xi_{n,m,k}\in H_{\gamma_{n,m,k}}$. 
First we have 
$$
\rho(z_1z_1^*)\xi_{n,m,k}=q^{2(k+m+n)}\xi_{n,m,k} 
$$
and hence $\xi_{n,m,k}$ is an eigenvector for $\rho(z_1z_1^*)$ with eigenvalue $q^{2(k+m+n)}$. We also have to consider $\rho(z_iz_i^*)\text{ for } i=2,3,4$ to conclude that $\xi_{n,m,k}\in H_{\gamma_{n,m,k}}$. For $\rho(z_2z_2^*)$ we get
$$
\begin{aligned}
\rho(z_2z_2^*) & \xi_{n,m,k}
= \rho(z_1^r{z_1^*}^tz_2z_2^*z_2^kz_3^mz_4^n)\xi_{0,0,0} \\
& = \rho(z_1^r{z_1^*}^tz_2^2z_2^*z_2^{k-1}z_3^mz_4^n)\xi_{0,0,0}
+ (1-q^2)\rho(z_1^r{z_1^*}^tz_2z_1z_1^*z_2^{k-1}z_3^mz_4^n)\xi_{0,0,0} \\
&= \rho(z_1^r{z_1^*}^tz_2^2z_2^*z_2^{k-1}z_3^mz_4^n)\xi_{0,0,0}
+ (1-q^2)q^{2(k-1)}q^{2(m+n)}\rho(z_1^r{z_1^*}^tz_2^{k}z_3^mz_4^n)\xi_{0,0,0}\\ 
&= \rho(z_1^r{z_1^*}^tz_2^3z_2^*z_2^{k-2}z_3^mz_4^n)\xi_{0,0,0}
+ (1-q^2)q^{2(m+n)}(q^{2(k-1)}+q^{2(k-2)})\rho(z_1^r{z_1^*}^tz_2^{k}z_3^mz_4^n)\xi_{0,0,0}\\ 
&\ \ \vdots \\
&= \rho(z_1^r{z_1^*}^tz_2^kz_2z_2^*z_3^mz_4^n)\xi_{0,0,0}
\\
&\ \ \ + (1-q^2)q^{2(m+n)}(q^{2(k-1)}+q^{2(k-2)}+\hdots + q^2+1)\rho(z_1^r{z_1^*}^tz_2^{k}z_3^mz_4^n)\xi_{0,0,0} \\
&= (1-q^2)q^{2(m+n)} \frac{q^{2k}-1}{q^2-1} \rho(z_1^r{z_1^*}^tz_2^{k}z_3^mz_4^n)\xi_{0,0,0} \\
&= (1-q^{2k})q^{2(m+n)}\xi_{n,m,k}. 
\end{aligned} 
$$
Hence $\xi_{n,m,k}$ is an eigenvector for $\rho(z_2z_2^*)$ with eigenvalue $\gamma_{n,k,m}$. In a similar way we can show that $\xi_{n,m,k}$ is an eigenvector for $\rho(z_3z_3^*)$ and $\rho(z_4z_4^*)$ with eigenvalues $\delta_{n,m}$ and $\lambda_n$ respectively. Hence $\xi_{n,m,k}\in H_{\gamma_{n,m,k}}$. 

Using the relations for the generators of $C(S_q^7)$, one can show that $J_1\cap\pol(\C P_q^{3,\mu})$ is linearly spanned by monomials of the form
$$
{z_4^*}^{n_4}{z_3^*}^{n_3}{z_2^*}^{n_2}{z_1^*}^{n_1}z_1^{m_1}z_2^{m_2}z_3^{m_3}z_4^{m_4}, 
$$
with $n_2+n_3+m_1+m_4=n_1+n_4+m_2+m_3$, $n_1,m_1\geq 1$. We claim that 
$$
\rho({z_4^*}^{n_4}{z_3^*}^{n_3}{z_2^*}^{n_2}{z_1^*}^{n_1}z_1^{m_1}z_2^{m_2}z_3^{m_3}z_4^{m_4})\xi_{n,m,k}=K(n_i,m_i,n,m,k,r,t)\xi_{ m_4+n-n_4, m_3+m-n_3,m_2+k-n_2 },
$$
where $m_2+k-n_2, m_3+m-n_3, m_4+n-n_4\geqslant 0$ and $K(n_i,m_i,n,m,k,r,t)$ is a numerical value depending on the given parameters. 
To this end, we want to move all terms with $z_4$ and $z_4^*$ to the ends and after that do the same for $z_3$ and $z_2$. 
We illustrate this process by the calculation involving $z_4$. Let 
$$ q':=q^{-m_4(r+t+k+m)}q^{n_4(n_3+n_2+n_1+m_1+m_2+m_3+r+t+k+m)}. $$ 
Then 
$$
\begin{aligned}
&\rho({z_4^*}^{n_4}{z_3^*}^{n_3}{z_2^*}^{n_2}{z_1^*}^{n_1}z_1^{m_1}z_2^{m_2}z_3^{m_3}z_4^{m_4})\xi_{n,m,k}
 \\ 
 &= q'
\rho({z_3^*}^{n_3}{z_2^*}^{n_2}{z_1^*}^{n_1}z_1^{m_1}z_2^{m_2}z_3^{m_3}z_1^{r}{z_1^*}^t z_2^k z_3^m {z_4^*}^{n_4}z_4^{m_4+n})\xi_{0,0,0}  \\
&= q'\rho({z_3^*}^{n_3}{z_2^*}^{n_2}{z_1^*}^{n_1}z_1^{m_1}z_2^{m_2}z_3^{m_3}z_1^{r}{z_1^*}^t z_2^k z_3^m {z_4^*}^{n_4-1}z_4z_4^*z_4^{m_4+n-1})\xi_{0,0,0} 
\\
&\ \ + q'(1-q^2)q^{2(m_4+n-1)}\rho({z_3^*}^{n_3}{z_2^*}^{n_2}{z_1^*}^{n_1}z_1^{m_1}z_2^{m_2}z_3^{m_3}z_1^{r}{z_1^*}^t z_2^k z_3^m {z_4^*}^{n_4-1}z_4^{m_4+n-1}z_1z_1^*)\xi_{0,0,0}    \\
&=  q'\rho({z_3^*}^{n_3}{z_2^*}^{n_2}{z_1^*}^{n_1}z_1^{m_1}z_2^{m_2}z_3^{m_3}z_1^{r}{z_1^*}^t z_2^k z_3^m {z_4^*}^{n_4-1}z_4^2z_4^*z_4^{m_4+n-2})\xi_{0,0,0} 
\\ 
&\ \ + q'(1-q^2)q^{2(m_4+n-2)}\rho({z_3^*}^{n_3}{z_2^*}^{n_2}{z_1^*}^{n_1}z_1^{m_1}z_2^{m_2}z_3^{m_3}z_1^{r}{z_1^*}^t z_2^k z_3^m {z_4^*}^{n_4-1}z_4^{m_4+n-1})\xi_{0,0,0}    \\
&\ \ + q'(1-q^2)q^{2(m_4+n-1)}\rho({z_3^*}^{n_3}{z_2^*}^{n_2}{z_1^*}^{n_1}z_1^{m_1}z_2^{m_2}z_3^{m_3}z_1^{r}{z_1^*}^t z_2^k z_3^m {z_4^*}^{n_4-1}z_4^{m_4+n-1})\xi_{0,0,0}    
\\
& \ \ \vdots
\\
&= q'(1-q^2)(q^{2(m_4+n-1)}+\hdots + q^2+1)\rho({z_3^*}^{n_3}{z_2^*}^{n_2}{z_1^*}^{n_1}z_1^{m_1}z_2^{m_2}z_3^{m_3}z_1^{r}{z_1^*}^t z_2^k z_3^m {z_4^*}^{n_4-1}z_4^{m_4+n-1})\xi_{0,0,0}  \\
&= q'(1-q^{2(m_4+n)})\rho({z_3^*}^{n_3}{z_2^*}^{n_2}{z_1^*}^{n_1}z_1^{m_1}z_2^{m_2}z_3^{m_3}z_1^{r}{z_1^*}^t z_2^k z_3^m {z_4^*}^{n_4-1}z_4^{m_4+n-1})\xi_{0,0,0}.
\end{aligned}
$$
Continuing as above we get
$$
\begin{aligned}
&\rho({z_4^*}^{n_4}{z_3^*}^{n_3}{z_2^*}^{n_2}{z_1^*}^{n_1}z_1^{m_1}z_2^{m_2}z_3^{m_3}z_4^{m_4})\xi_{n,m,k} \\
&=q'(1-q^{2(m_4+n)})(1-q^{2(m_4+n-1)})(1-q^{2(m_4+n-2)})\cdots(1-q^{2(m_4+n-n_4+1)})
\\
&\ \ \ \ \rho({z_3^*}^{n_3}{z_2^*}^{n_2}{z_1^*}^{n_1}z_1^{m_1}z_2^{m_2}z_3^{m_3}z_1^{r}{z_1^*}^t z_2^k z_3^m z_4^{m_4+n-n_4})\xi_{0,0,0}.
\end{aligned}
$$
Let 
$$
\begin{aligned}
K_1(n_i,m_i,n,m,k,r,t)&= q^{-m_4(r+t+k+m)}q^{n_4(n_3+n_2+n_1+m_1+m_2+m_3+r+t+k+m)}\prod_{i=0}^{n_4-1} (1-q^{2(m_4+n-i)}) \\
K_2(n_i,m_i,n,m,k,r,t)&=q^{-m_3(r+t+m)}q^{n_3(n_2+n_1+m_1+m_2+r+t+k)} q^{2n_3(m_4+n-n_4)} \prod_{i=0}^{n_3-1} (1-q^{2(m_3+m-i)})\\
K_3(n_i,m_i,n,m,k,r,t)&=q^{-m_2(r+t)}q^{n_2(n_1+m_1)}q^{2n_2(m_4+m-n_4+m_3+m-n_3)} \prod_{i=0}^{n_2-1} (1-q^{2(m_2+k-i)})
\end{aligned} 
$$
and denote the product 
$$ K_1(n_i,m_i,n,m,k,r,t)K_2(n_i,m_i,n,m,k,r,t)K_3(n_i,m_i,n,m,k,r,t) $$
by $K(n_i,m_i,n,m,k,r,t)$. Then 
\begin{equation}\label{rep}
\rho({z_4^*}^{n_4}{z_3^*}^{n_3}{z_2^*}^{n_2}{z_1^*}^{n_1}z_1^{m_1}z_2^{m_2}z_3^{m_3}z_4^{m_4})\xi_{n,m,k}=K(n_i,m_i,n,m,k,r,t)\xi_{m_4+n-n_4, m_3+m-n_3,m_2+k-n_2 }. 
\end{equation}

If $m_2+k< n_2$, $m_3+m<n_3$ or $m_4+n<n_4$ then 
$$ \rho({z_4^*}^{n_4}{z_3^*}^{n_3}{z_2^*}^{n_2}{z_1^*}^{n_1}z_1^{m_1}z_2^{m_2}z_3^{m_3}z_4^{m_4})\xi_{n,m,k}=0. $$ 
Indeed 
$$
\begin{aligned}
&\norm{\rho({z_4^*}^{n_4}{z_3^*}^{n_3}{z_2^*}^{n_2}{z_1^*}^{n_1}z_1^{m_1}z_2^{m_2}z_3^{m_3}z_4^{m_4})\xi_{n,m,k}}^2
\\
&= \inpro{\xi_{n,m,k}}{\rho(
{z_4^*}^{m_4}{z_3^*}^{m_3}{z_2^*}^{m_2}{z_1^*}^{m_1}z_1^{n_1}z_2^{n_2}z_3^{n_3}z_4^{n_4}
{z_4^*}^{n_4}{z_3^*}^{n_3}{z_2^*}^{n_2}{z_1^*}^{n_1}z_1^{m_1}z_2^{m_2}z_3^{m_3}z_4^{m_4})\xi_{n,m,k}}.
\end{aligned}
$$
Moving $z_4$ and $z_4^*$ to the ends we get the following expression up to a constant
$$
\rho({z_3^*}^{m_3}{z_2^*}^{m_2}{z_1^*}^{m_1}z_1^{n_1}z_2^{n_2}z_3^{n_3}
{z_3^*}^{n_3}{z_2^*}^{n_2}{z_1^*}^{n_1}z_1^{m_1}z_2^{m_2}z_3^{m_3}z_2^kz_3^m{z_4^*}^{m_4}{z_4}^{n_4}{z_4^*}^{n_4}{z_4}^{m_4+n})\xi_{0,0,0}.
$$
If $n_4>m_4+n$ then performing a calculation similar to the above we get the following, up to some constant
$$
\rho({z_3^*}^{m_3}{z_2^*}^{m_2}{z_1^*}^{m_1}z_1^{n_1}z_2^{n_2}z_3^{n_3}
{z_3^*}^{n_3}{z_2^*}^{n_2}{z_1^*}^{n_1}z_1^{m_1}z_2^{m_2}z_3^{m_3}z_2^kz_3^m{z_4}^{n_4-m_4}{z_4^*}^{n_4-(m_4+n)})\xi_{0,0,0}
$$
We can now reduce this to the following up to a constant 
$$
\rho({z_3^*}^{m_3}{z_2^*}^{m_2}{z_1^*}^{m_1}z_1^{n_1}z_2^{n_2}z_3^{n_3}
{z_3^*}^{n_3}{z_2^*}^{n_2}{z_1^*}^{n_1}z_1^{m_1}z_2^{m_2}z_3^{m_3}z_2^kz_3^m{z_4}^{n+1}{z_4^*})\xi_{0,0,0}=0. 
$$
A similar calculation can be done when $m_2+k< n_2$ or $m_3+m<n_3$. 
\\

Note that  $H_{\gamma_{0,0,0}}$ is one dimensional since $\rho$ is irreducible. Indeed, if $\eta_{0,0,0}\in H_{\gamma_{0,0,0}}$ and is orthogonal to $\xi_{0,0,0}$ then we can use the same construction above for $\eta_{0,0,0}$ instead of $\xi_{0,0,0}$. By the construction we get that  $\rho(C(\CP_q^{3,\mu}))\xi_{0,0,0}$ and $\rho(C(\CP_q^{3,\mu}))\eta_{0,0,0}$ are orthogonal dense subspaces of $H$ which cannot be true. \\

We now want to normalize $\xi_{n,m,k}$. The norm is given by the following using equation (\ref{rep})
$$
\begin{aligned}
\norm{\xi_{n,m,k}}^2 &=\inpro{\xi_{0,0,0}}{\rho({z_4^*}^n{z_3^*}^m{z_2^*}^k{z_1^*}^{r+t}z_1^{r+t}z_2^kz_3^mz_4^n)\xi_{0,0,0}} \\
&= q^{n(2(m+k)+r+t)}q^{m(2k+r+t)} \prod_{i=1}^n (1-q^{2i})\prod_{i=1}^m (1-q^{2i})\prod_{i=1}^k (1-q^{2i})=:N_{n,m,k}. 
\end{aligned}
$$

With the normalization constant we get a basis for our Hilbert space, also denoted $\xi_{n,m,k}$, given by 
$$
\xi_{n,m,k}:={N_{n,m,k}}^{-1/2}\rho(z_1^r{z_1^*}^tz_2^kz_3^mz_4^n)\xi_{0,0,0},  
$$
with $k+m+t=r+n$ and $n,m,k,r,t\geqslant 0$. 
\\

Hence if $\rho$ is an irreducible representation of $C(\C P_q^{3,\mu}))$ then $\rho$ restricted to the ideal $J_1$ is given by the following for $n_2+n_3+m_1+m_4=n_1+n_4+m_2+m_3$, $n_1,m_1\geq 1$
$$
\begin{aligned}
\rho({z_4^*}^{n_4}{z_3^*}^{n_3}{z_2^*}^{n_2} & {z_1^*}^{n_1}z_1^{m_1}z_2^{m_2}z_3^{m_3}z_4^{m_4})\xi_{n,m,k}=  C(n_i,m_i,n,m,k,r,t)\xi_{m_4+n-n_4, m_3+m-n_3, m_2+k-n_2},
\end{aligned}
$$
when $m_2+k-n_2, m_3+m-n_3, m_4+n-n_4\geqslant 0$ otherwise it is $0$. Here 
$$
\begin{aligned}
C(n_i,m_i,n,m,k,r,t)&=
K(n_i,m_i,n,m,k,r,t)
q^{(m_3+m-n_3)2(n_2-m_2)+(n_3-m_3)(2k+r+t)}
\\
&\cdot (q^{(m_4+n-n_4)2(n_3-m_3+n_2-m_2)+(2(m+k)+r+t)(n_4-m_4)})^{-1/2}
\\
&\cdot\left( \prod_{i=m_4+n-n_4+1}^n (1-q^{2i})\prod_{i=m_4+m-n_3+1}^m (1-q^{2i}) \prod_{i=m_2+k-n_2+1}^k (1-q^{2i})\right)^{-1/2}
\end{aligned} 
$$
where the last part takes the normalisation into account . 

Since all irreducible representations of $J_1$ can be extended to $C(\C P_q^{3,\mu})$, \cite[Theorem 5.5.1]{gm}, and by the result above there only exists one irreducible representation, up to unitary equivalence, of $J_1$, hence $J_1$ is compact.

Note that $\rho(z_1z_1^*)$ is trace class since 
$$
\sum_{n,m,k\geq 0}\inpro{\rho(z_1z_1^*)\xi_{n,m,k}}{\xi_{n,m,k}}=\sum_{n,m,k\geq 0} q^{2(n+m+k)} 
$$
which is finite because $q\in (0,1)$. Then $\rho(J_1)\subseteq K(H)$. By \cite[Theorem 10.4.10]{kr} we get $K(H)\subseteq \rho(J_1)$ hence $\rho(J_1)=K(H)$. 
\end{proof}

We then obtain the following short exact sequence of $C^*$-algebras 
\begin{equation}\label{exactseq}
0\to \mathcal{K} \to C(\C P_q^{3,\mu}) \to C^*(F) \to 0. 
\end{equation}
The space of compact operators is an AF algebra. Moreover, $C^*(F)$ is AF since it does not contain any cycles, \cite[Corollary 2.13]{dt}. Then $C(\C P_q^{3,\mu})$ is AF since the ideal and the quotient in \eqref{exactseq} are both AF algebras. We note that  the ideal $\mathcal{K}$ is essential in $C(\C P_q^{3,\mu})$, since it intersects non-trivially each non-zero ideal of $C(\C P_q^{3,\mu})$. 

From the six term exact sequence of $K$-theory we get the following short exact sequence 
$$
0\to K_0(\mathcal{K})\to K_0(C(\C P_q^{3,\mu}))\to K_0(C^*(F))\to 0
$$
where $K_0(\mathcal{K})\cong \Z$ and $K_0(C^*(F))\cong \Z^3$ by \cite{hs}. The above short exact sequence splits since $\Z^3$ is a free module over $\Z$ and hence projective. Thus $K_0(C(\C P_q^{3,\mu})))\cong \Z\oplus \Z^3 \cong \Z^4.$

\begin{theorem}
The $C^*$-algebra $C(\C P_q^{3,\mu})$ is isomorphic to the graph $C^*$-algebra $C^*(G)$ for the following graph. 
\begin{center}
\begin{tikzpicture}[scale=1.5]
\draw (0,0) -- (2,0);
\draw  (2,0) -- (4,0);
\draw (4,0) -- (6,0);
\filldraw [black] (0,0) circle (1pt);
\filldraw [black] (2,0) circle (1pt);
\filldraw [black] (4,0) circle (1pt);
\filldraw [black] (6,0) circle (1pt);
\draw [->] (0,0) -- (1,0);
\draw [->] (4,0) -- (5,0);
\draw [->] (2,0) -- (3,0);

\draw[] (0,0) to [out=20,in=160] (4,0);
\draw [->] (1.99,0.4) -- (2,0.4);
\draw[] (2,0) to [out=20,in=160] (6,0);
\draw [->] (3.99,0.4) -- (4,0.4);
\draw[] (0,0) to [out=-25,in=-155] (6,0);
\draw [->] (2.99,-0.74) -- (3,-0.74);

\node at (-1, 0)  {$G$};
\node at (0, 0.2)  {$v_1$};
\node at (2, 0.2)  {$v_2$};
\node at (4, 0.2)  {$v_{3}$};
\node at (6, 0.2)  {$v_4$};
\node at (1.4, -0.2)  {$(\infty)$};
\node at (4.5, -0.2)  {$(\infty)$};
\node at (3, -0.2)  {$(\infty)$};
\node at (1.75, 0.65)  {$(\infty)$};
\node at (4.25, 0.65)  {$(\infty)$};
\node at (3, -1)  {$(\infty)$};
\end{tikzpicture}
\end{center}
\end{theorem}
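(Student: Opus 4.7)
The plan is to construct a $*$-homomorphism $\phi: C^*(G) \to C(\CP_q^{3,\mu})$ via the universal property of $C^*(G)$, by exhibiting elements of $C(\CP_q^{3,\mu})$ satisfying the Cuntz--Krieger relations for $G$, and then deduce that $\phi$ is an isomorphism by combining the Cuntz--Krieger uniqueness theorem with a comparison of two short exact sequences. Under the desired isomorphism, the ideal $J_1 \cong \K$ of $C(\CP_q^{3,\mu})$ corresponds to the ideal of $C^*(G)$ generated by the sink projection $P_{v_4}$; since $\{v_4\}$ is a hereditary saturated subset of $G^0$, the quotient $C^*(G)/\langle P_{v_4}\rangle$ is isomorphic to the graph $C^*$-algebra of $G\setminus\{v_4\}$, which is exactly $C^*(F)$. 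Thus $C^*(G)$ fits into a short exact sequence $0 \to \K \to C^*(G) \to C^*(F) \to 0$ parallel to \eqref{exactseq}.

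To build the candidate generators, I would use the AF property of $C(\CP_q^{3,\mu})$ (noted in the paragraph following \eqref{exactseq}) to lift the generators of $C^*(F) \cong C(\CP_q^{3,\mu})/J_1$ from the first theorem of Subsection~\ref{TheCalgebra}: three mutually orthogonal vertex projections $P_{v_1}, P_{v_2}, P_{v_3}$ (corresponding to $P_{w_1}, P_{w_2}, P_{w_3}$ in $F$) and compatible partial isometries for the three edge families among them. I would then set $P_{v_4} := 1 - P_{v_1} - P_{v_2} - P_{v_3}$, which is automatically a projection lying in $J_1 \cong \K$. For each of the three families of edges from $v_j$ to $v_4$ ($j = 1, 2, 3$), I would construct countably many mutually orthogonal partial isometries inside $\K$ as matrix units in the explicit representation of $\K$ on the basis $\{\xi_{n,m,k}\}$ built in the proof of Theorem~\ref{ideal}; each such partial isometry has initial projection equal to $P_{v_4}$ and final projection a mutually orthogonal subprojection of $P_{v_j}$.

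Next I would verify the Cuntz--Krieger relations for $G$: orthogonality of the ranges, $S_e^* S_e = P_{r(e)}$, and $S_e S_e^* \leq P_{s(e)}$. Condition (CK3) is vacuous since every non-sink vertex of $G$ emits infinitely many edges. Universality of $C^*(G)$ then produces $\phi: C^*(G) \to C(\CP_q^{3,\mu})$. Injectivity follows from the Cuntz--Krieger uniqueness theorem \cite{flr00}: $G$ has no cycles, and each $\phi(P_{v_j})$ is nonzero -- for $j \leq 3$ because the lift maps to a nonzero projection in $C^*(F)$, and for $j = 4$ because $P_{v_4}$ is by construction a nonzero projection in $\K$.

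For surjectivity, $\phi(C^*(G))$ surjects onto $C^*(F) \cong C(\CP_q^{3,\mu})/J_1$ (by construction of the $F$-part generators as lifts) and contains all of $\K$ (the partial isometries into $v_4$ and their adjoints produce matrix units that densely span $\K$). A five-lemma argument applied to the comparing diagram of the two short exact sequences then implies that $\phi$ is an isomorphism. The main obstacle in executing this plan is the careful coordination between the $F$-part lifts and the partial isometries ending at $v_4$: each $S_e$ with $r(e) = v_4$ must satisfy $S_e^* S_e = P_{v_4}$ and $S_e S_e^* \leq P_{v_j}$ \emph{exactly} (not merely modulo $\K$), and different $S_e$'s must have mutually orthogonal final projections inside the appropriate $P_{v_j}$. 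Once this explicit bookkeeping is performed, the uniqueness theorem and the five-lemma complete the proof.
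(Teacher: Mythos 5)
Your strategy is genuinely different from the paper's. The paper never constructs a Cuntz--Krieger $G$-family inside $C(\C P_q^{3,\mu})$: it observes that both $C(\C P_q^{3,\mu})$ and $C^*(G)$ are unital AF algebras with $K_1=0$ and $K_0\cong\Z^4$, computes the positive cone $K_0(C^*(G))^+$ directly from the graph, computes $K_0(C(\C P_q^{3,\mu}))^+$ from the split exact sequence $0\to K_0(\K)\to K_0(C(\C P_q^{3,\mu}))\to K_0(C^*(F))\to 0$, checks that the two ordered groups with order unit $[1]_0$ coincide, and invokes Elliott's classification theorem. This sidesteps entirely the extension-theoretic work your route must confront head on. The parts of your outline that are correct and cheap are the identification of $C^*(G)/\langle P_{v_4}\rangle$ with $C^*(F)$ and of $\langle P_{v_4}\rangle$ with $\K$ (the set $\{v_4\}$ is hereditary and saturated, with no breaking vertices since every non-sink emits infinitely many edges into $\{v_1,v_2,v_3\}$), the injectivity via Cuntz--Krieger uniqueness, and the five-lemma step \emph{once} the maps on the ideal and the quotient are known to be isomorphisms.

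The gaps sit exactly in what you call the bookkeeping, and they are the crux rather than routine. First, $P_{v_4}:=1-P_{v_1}-P_{v_2}-P_{v_3}$ need not be nonzero: if the lifted vertex projections sum to $1$, the construction collapses, so one must deliberately shrink a lift by a finite-rank projection of $J_1$. Second, and more seriously, for each $j\le 3$ you need infinitely many mutually orthogonal subprojections of $P_{v_j}$, each equivalent to $P_{v_4}$ inside $\K$, and all orthogonal to the ranges of the \emph{infinitely many} lifted partial isometries emanating from $v_j$. Nothing in the proposal guarantees that one can lift the infinite edge family from the quotient so that $S_e^*S_e=P_{r(e)}$ and mutual orthogonality of ranges hold on the nose while still leaving an infinite-dimensional complement inside $P_{v_j}H$ (where $H$ is the Hilbert space with $J_1=\K(H)$); this is tantamount to comparing the extension \eqref{exactseq} with the graph extension $0\to\K\to C^*(G)\to C^*(F)\to 0$ as extensions, which requires an argument, not an assertion. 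Third, for surjectivity the five-lemma needs $\phi$ to map $\langle P_{v_4}\rangle$ \emph{onto} $J_1=\K(H)$, i.e. the projections $\phi(S_\alpha P_{v_4}S_\alpha^*)$, summed over all paths $\alpha$ of $G$ ending at $v_4$, must converge strictly to $1_H$; otherwise the image meets $\K(H)$ only in a proper corner and $\phi$ is injective but not surjective. Each of these points can in principle be arranged (indeed the isomorphism the paper proves guarantees that a suitable $G$-family exists), but none is automatic, and the paper's Elliott-type argument exists precisely to avoid having to arrange them by hand.
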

\begin{proof}
The K-groups of $C^*(G)$ are $K_0(C^*(G))\cong \Z^4$ and $K_1(C^*(G))=0$, \cite{dt1,hs}.  Hence $C(\C P_q^{3,\mu})$ and $C^*(G)$ have the the same $K_0$-groups and they are both AF algebras. We want to use Elliott's classification theorem for unital AF algebras \cite{ge} , that is, we wish to show that the triples $(K_0(C(\C P_q^{3,\mu})), K_0(C(\C P_q^{3,\mu}))^+, [1]_0)$ and $(K_0(C^*(G)), K_0(C^*(G))^+, [1]_0)$ are isomorphic as ordered groups. First we will show that $K_0(C(\C P_q^{3,\mu}))^+$ and $K_0(C^*(G))^+$ are the same. 

We denote by $\{s_e,p_v| e\in G^1, v\in G^0\}$ the Cuntz-Krieger family defining $C^*(G)$. A generating set for $K_0(C^*(G))$ is $[1]_0, [p_{v_2}+p_{v_3}+p_{v_4}]_0, [p_{v_3}+p_{v_4}]_0$ and $[p_{v_4}]_0$. We see that we can subtract the next generator from the previous one a number of times and still get a projection. Indeed, let $S_{v}$ be a finite subset of $s^{-1}(v)$ then 
$$
\begin{aligned} 
\ [p_v]_0&=[p_v-\sum_{e\in S_v}s_es_e^*]_0+\sum_{e\in S_v}[s_es_e^*]_0=[p_v-\sum_{e\in S_v}s_es_e^*]_0+\sum_{e\in S_v}[s_e^*s_e]_0 \\ &=
[p_v-\sum_{e\in S_v}s_es_e^*]_0+\sum_{e\in S_v}[p_{r(e)}]_0.
\end{aligned} 
$$ 
Hence 
$$
[p_v]_0-\sum_{e\in S_v}[p_{r(e)}]_0=[p_v-\sum_{e\in S_v}s_es_e^*]_0\in K_0(C^*(G))^+.
$$
Consider the following calculation which follows by the above 
$$
\begin{aligned} 
\ [1]_0-3[p_{v_2}+p_{v_3}+p_{v_4}]_0&= [p_{v_1}]-2[p_{v_2}+p_{v_3}+p_{v_4}]_0 \\
&= [p_{v_1}-\sum_{i=2,3,4} s_{e_i}s_{e_i}^*]_0-[p_{v_2}+p_{v_3}+p_{v_4}]_0 \\
&= [p_{v_1}-\sum_{i=2,3,4} s_{e_i}s_{e_i}^*]_0- [\sum_{i=2,3,4} s_{f_i}^*s_{f_i}]  \\
&= [p_{v_1}-\sum_{i=2,3,4} s_{e_i}s_{e_i}^*]_0- \sum_{i=2,3,4} [s_{f_i}s_{f_i}^*]  \\
&= [p_{v_1}-\sum_{i=2,3,4} s_{e_i}s_{e_i}^*-\sum_{i=2,3,4} s_{f_i}s_{f_i}^*]_0
\end{aligned} 
$$
where $e_i,f_i\in r^{-1}(v_i)\cap s^{-1}(v_1), i=2,3,4$. 
Since $s^{-1}(v_i)\cap s^{-1}(v_1), i=2,3,4$ consists of infinitely many edges we can subtract $n[p_{v_2}+p_{v_3}+p_{v_4}]_0$ from $[1]_0$ for all $n\in\Nb$ and still get an element in $K_0(C^*(G))^+$. By similar calculations we see that we can subtract the next generator from the previous one a number of times and still get an element in $K_0(C^*(G))^+$. 

Identifying $[1]_0$ with $(1,0,0,0)$,  $[p_{v_2}+p_{v_3}+p_{v_4}]_0$ with $(0,1,0,0)$ and so on, we get that an element $(n_1,n_2,n_3,n_4)$ is positive if and only if we have one of the following four cases: 
$$
\begin{aligned} 
&\text{(1)} \ n_i\geq 0, i=1,2,3,4, \ \ \ &&\text{(2)} \ n_1\geq 1 \  \text{and} \ n_i\in \Z \ \text{for} \ i=2,3,4,
\\
&\text{(3)} \ n_1\geq 0, n_2\geq 1 \ \text{and} \ n_3,n_4\in \Z, \ \ \ \  
&&\text{(4)} \ n_1\geq 0, n_2\geq 0, n_3\geq 1  \ \text{and} \ n_4\in \Z.
\end{aligned}
$$

We know that $K_0(C(\C P_q^{3,\mu})) \cong K_0(\mathcal{K})\oplus K_0(C^*(F))$. A generating set for $K_0(C^*(F))$ is $[1]_0, [p_{v_2}+p_{v_3}]_0$ and $[p_{v_3}]_0$. Since the ideal $J_1$ is AF all the projections in the quotient lift to a projection, \cite[Lemma 9.7]{ee}, hence there exists projections $q_1,q_2$ and $q_3$ in $C(\C P_q^{3,\mu})$ such that 
$$
[\pi(q_1)]_0=[1]_0, \ [\pi(q_2)]_0=[p_{v_2}+p_{v_3}]_0, \ [\pi(q_3)]_0=[p_{v_3}]_0. 
$$
where $\pi$ is the quotient map. Let $q_4$ be the minimal projection in $\mathcal{K}$, then the last generator is $[q_4]_0$. 

Consider an element $n_1[q_1]_0+n_2[q_2]_0+n_3[q_3]_0+n_4[q_4]_0\in K_0(C(\C P_q^{3,\mu}))$. We wish to determine for which coefficients $n_i\in \Z, i=1,2,3,4$ the element is in $K_0(C(\C P_q^{3,\mu}))^+$. 

We have 
$$
K_0(\pi)(n_1[q_1]_0+n_2[q_2]_0+n_3[q_3]_0+n_4[q_4]_0)=n_1[1]_0+n_2[p_{v_2}+p_{v_3}]_0+n_3[p_{v_3}]_0.
$$
As for $C^*(G)$ we have that  $n_1[1]_0+n_2[p_{v_2}+p_{v_3}]_0+n_3[p_{v_3}]_0\in K_0(C^*(F))^+$ if and only if one of the following three cases are satisfied: 
$$
\begin{aligned} 
\text{(1)} \ n_i\geq 0, i=1,2,3, \ \ \ \ \text{(2)} \ n_1\geq 1 \ \text{and} \ n_2,n_3\in \Z, \ \ \ \ \text{(3)} \ n_1\geq 0, n_2\geq 1 \ \text{and} \ n_3\in \Z.
\end{aligned}  
$$
Hence 
$$
K_0(\pi)(n_1[q_1]_0-n_2[q_2]_0-n_3[q_3]_0-n_4[q_4]_0)=K_0(\pi)([p]_0)
$$
for a projection $p\in C(\C P_q^{3,\mu})$ if and only if the coefficients satisfies one of the above conditions. Then there must exist an $m_4\in \Z$ such that 
$$
n_1[q_1]_0+n_2[q_2]_0+n_3[q_3]_0+(m_4+n_4)[q_4]_0= [p]_0 \in  K_0(C(\C P_q^{3,\mu}))^+. 
$$
Identifying $[q_i]_0$ with $(0,..,1,...0)$ where $1$ is in the $i'$th entry we get that $K_0(C(\C P_q^{3,\mu}))^+$ and $K_0(C^*(F))^+$ are the same. 

The unit $[1]_0$ in both cases corresponds to $(1,0,0,0)$. By Elliot's classification theorem we have that $K_0(C(\C P_q^{3,\mu}))$ is isomorphic to $C^*(G)$. 

\end{proof}


\subsection{Freenes of the circle action}\label{Freenes}
The circle action on $C(S_q^7)$ defining $C(\CP_q^{3,\mu})$ is slightly different from the gauge action but, as Theorem \ref{TheCalgebra} proves, their fixed point algebras are the same. In this section we show that the modified action $\mu$ is free, which is also the case for the gauge action due to \cite[Proposition 2]{S01}. 

We will consider the graph $C^*$-algebraic picture of $C(S_q^7)$ to show that the action $\mu$ is free. Under the isomorphism $C(S_q^7)\cong C^*(L_7)$ the action $\mu$ becomes 
$$
\mu_w(S_{e_{ij}})=\begin{cases}
wS_{e_{ij}} & i=1,4 \\
\overline{w}S_{e_{ij}} & i=2,3
\end{cases}, \ \ \ \mu_w(P_{v_i})=P_{v_i}, i=1,2,3,4
$$
for all $w\in U(1)$. Then the corresponding coaction $\hat{\mu}:C^*(L_7)\to C^*(L_7)\otimes C(S^1) $ of $\mu$ is given on the generators as follows: 
$$
\hat{\mu}(P_{v_i})=P_{v_i}, \ \ 
\hat{\mu}(S_{e_{ij}})=\begin{cases}
S_{e_{ij}}\otimes u, & i=1,4 \\
S_{e_{ij}}\otimes u^*, & i=2,3
\end{cases}
$$
where $u$ is the canonical generator of $C(S^1)$. 

\begin{proposition} 
The action $\mu$ defined in \eqref{newaction} is free.  
\end{proposition}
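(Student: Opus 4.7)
The plan is to verify Ellwood's density criterion for $\mu$ directly, i.e.\ that the linear map
\[
\Phi:A\otimes A\longrightarrow A\otimes C(S^1),\qquad \Phi(a_1\otimes a_2)=(a_1\otimes 1)\hat\mu(a_2),
\]
has dense image, where $A=C(S_q^7)\cong C^*(L_7)$. Two elementary identities make the image of $\Phi$ well structured: it is a left $A$-module since $\Phi(aa_1\otimes a_2)=(a\otimes 1)\Phi(a_1\otimes a_2)$, and a right $\hat\mu(A)$-module since $\Phi(a_1\otimes a_2b)=\Phi(a_1\otimes a_2)\hat\mu(b)$. As $\mathrm{span}\{u^n:n\in\Z\}$ is dense in $C(S^1)$ and $A$ is unital, it suffices to exhibit $1\otimes u^n\in\mathrm{im}(\Phi)$ for every $n\in\Z$; left-multiplication by $a\otimes 1$ then yields $a\otimes u^n\in\mathrm{im}(\Phi)$ for every $a\in A$, and density follows.

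The base cases $n=\pm 1$ come directly from the Cuntz--Krieger relations. Every edge $e_{1j}$ has source $v_1$, so $\hat\mu(S_{e_{1j}})=S_{e_{1j}}\otimes u$ for $j=1,\dots,4$, and summing (CK1) gives
\[
\sum_{j=1}^{4}\Phi\bigl(S_{e_{1j}}^{*}\otimes S_{e_{1j}}\bigr)=\sum_{j=1}^{4}S_{e_{1j}}^{*}S_{e_{1j}}\otimes u=\sum_{j=1}^{4}P_{v_j}\otimes u=1\otimes u.
\]
For $1\otimes u^{-1}$ one writes $1=\sum_v P_v$ as a sum of products $ab$ with $\hat\mu(b)=b\otimes u^{-1}$: at $v_1$, (CK3) gives $P_{v_1}=\sum_{j=1}^{4}S_{e_{1j}}S_{e_{1j}}^{*}$ with $\hat\mu(S_{e_{1j}}^{*})=S_{e_{1j}}^{*}\otimes u^{-1}$; for $j=2,3$, (CK1) applied to the loop $e_{jj}$ yields $P_{v_j}=S_{e_{jj}}^{*}S_{e_{jj}}$ with $\hat\mu(S_{e_{jj}})=S_{e_{jj}}\otimes u^{-1}$ because $s(e_{jj})\in\{v_2,v_3\}$; and at $v_4$, (CK3) gives $P_{v_4}=S_{e_{44}}S_{e_{44}}^{*}$ with $\hat\mu(S_{e_{44}}^{*})=S_{e_{44}}^{*}\otimes u^{-1}$. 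Summing the corresponding $\Phi$-images produces $1\otimes u^{-1}\in\mathrm{im}(\Phi)$.

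The step from $n$ to $n\pm 1$ is then a formal consequence of the two module structures on $\mathrm{im}(\Phi)$. Given $1\otimes u^n\in\mathrm{im}(\Phi)$, one has
\[
\sum_{j=1}^{4}(S_{e_{1j}}^{*}\otimes 1)(1\otimes u^n)\hat\mu(S_{e_{1j}})=\sum_{j=1}^{4}P_{v_j}\otimes u^{n+1}=1\otimes u^{n+1}\in\mathrm{im}(\Phi),
\]
and $1\otimes u^{n-1}$ is obtained analogously, feeding the decomposition of $1$ from the $u^{-1}$ case into the right slot. Induction in both directions delivers $1\otimes u^n\in\mathrm{im}(\Phi)$ for every $n\in\Z$, and the preliminary reduction closes the argument. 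The only step that needs care is the $n=-1$ case: the non-uniform cocycle $s(e)\mapsto\pm 1$ forces a vertex-by-vertex mix of (CK1) and (CK3) in the decomposition of $1$ as a sum of products whose right factor has $\hat\mu$-weight $-1$; once that bookkeeping is arranged, the remaining induction and density conclusion are automatic.
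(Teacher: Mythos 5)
Your argument is correct, and it takes a genuinely different route from the paper's. The paper also reduces freeness to producing $P_{v_i}\otimes u^{k}$ and $P_{v_i}\otimes {u^*}^{k}$ in the image of $\Phi$, but it then treats each power $k$ directly: it expands $P_{v_i}$ as a sum over all paths of length $k$ emanating from $v_i$ (iterating (CK3)), observes that the paths staying in $\{v_2,v_3\}$ already carry the correct $\mu$-weight, and repairs the remaining paths (those passing through $v_4$) by inserting factors $S_{\alpha_t}^*S_{\alpha_t}$ with $\alpha_t$ a power of the loop at $v_4$, chosen so that the total weight becomes $u^k$; this forces a case analysis over path types and is only written out in full for $v_2$. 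You instead observe that $\mathrm{im}(\Phi)$ is a left $A\otimes 1$-module and a right $\hat\mu(A)$-module, so that once $1\otimes u^{\pm 1}$ is exhibited, all $1\otimes u^n$ follow by a two-sided induction, and the density reduction finishes the proof. Your base cases check out: the $n=1$ case is the sum of (CK1) over the edges $e_{1j}$ (all of weight $+1$, with ranges exhausting the vertex set), and the $n=-1$ case correctly mixes (CK3) at $v_1$ and $v_4$ with (CK1) at the loops $e_{22},e_{33}$ so that every right-hand factor has weight $-1$. The induction step is legitimate because each summand $(a\otimes 1)(1\otimes u^n)\hat\mu(b)$ lies in $\mathrm{im}(\Phi)$ whenever $1\otimes u^n$ does. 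What the trade-off buys: the paper's computation produces explicit $\Phi$-preimages of $P_{v_i}\otimes u^{k}$ for every $k$, whereas your induction eliminates all path combinatorics and isolates the only real content --- that the non-uniform weight $s(e)\mapsto\pm 1$ still admits, at every vertex, a decomposition of $P_v$ into products whose right factor is homogeneous of weight $+1$, and likewise of weight $-1$. Both are complete proofs; yours is shorter and more transparent.
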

\begin{proof}
It suffices to show that the image of the map $\Phi$ (see Definition \ref{Free}) contains $P_{v_i}\otimes u^k$ and $P_{v_i}\otimes {u^*}^k$ for $i=1,...,4$ and any $k\in \mathbb{N}$. 

First we have immediately for all $k\in\mathbb{N}$ the following:  
$$
\begin{aligned}
\Phi({S_{e_{ii}}^*}^k\otimes S_{e_{ii}}^k)=\begin{cases}
({S_{e_{ii}}^*}^k\otimes 1)(S_{e_{ii}}^k\otimes u^k), & i=1,4\\
({S_{e_{ii}}^*}^k\otimes 1)(S_{e_{ii}}^k\otimes {u^*}^k), & i=2,3 
\end{cases}
= \begin{cases}
P_{v_i}\otimes u^k, & i=1,4 \\
P_{v_i}\otimes {u^*}^k, & i=2,3
\end{cases}.
\end{aligned}
$$
Hence the image contains $P_{v_i}\otimes u^k, i=1,4$ and $P_{v_j}\otimes {u^*}^k, j=2,3$ for any $k\in \mathbb{N}$. 

To show that the image of $\Phi$ contains $P_{v_i}\otimes u^k$ for  $i=2,3$ and $P_{v_j}\otimes {u^*}^k$ for $j=1,4$ for any $k\in \mathbb{N}$ we need to consider the four cases separately. First note that since $P_{v_4}=S_{e_{44}}S_{e_{44}}^*$ we have 
$$
\Phi(S_{e_{44}}^k\otimes {S_{e_{44}}^*}^k)= S_{e_{44}}^k{S_{e_{44}}^*}^k\otimes {u^*}^k=P_{v_4}\otimes {u^*}^k$$
for any $k\in\mathbb{N}$. 

For $i=2$ we follow a similar approach as the one in \cite[Proposition 2]{S01}, and apply that for any $k\in\mathbb{N}$ we have 
$$
P_{v_2}=\sum_{s(\beta)=v_2, |\beta|=k} S_{\beta}S_{\beta}^*
$$
where $\beta$ is a path of length $k$ starting in $v_2$. 

Let $B$ be the set of all paths of length $k$ with source $v_2$. Let $B_1$ be the paths $\beta\in B$ such that $\beta=f_1f_2\cdots f_k$ with $s(f_i)\in \{v_2,v_3\}$ for $i=1,..,k$. Then for any $\beta\in B_1$ and $k\in\mathbb{N}$ we have
$$
\Phi(S_{\beta}\otimes S_{\beta}^*)=S_{\beta}S_{\beta}^*\otimes u^k.
$$
On the other hand for $\beta\in B\setminus B_1$ the above might not be the case. The paths in $B\setminus B_1$ are of the form 
\begin{itemize}
\item[(1)] $\beta_{l,m}=e_{22}^{k-m-l-2}e_{23}e_{33}^me_{34}e_{44}^l$ with $l\geq 1$ and $m\geq 0$,
\item[(2)] $\beta_l=e_{22}^{k-l-1}e_{24}e_{44}^l$ with $l\geq 1$,
\end{itemize}
where $e_{ii}^k$ is the path going through the loop $e_{ii}$ $k$ times. We denote by $B_2$ all paths of type (1) and by $B_3$ all paths of type (2). 

We have 
$$
S_{\beta_{l,m}}S_{\beta_{l,m}}^*=S_{\beta_{l,m}}P_{v_4}S_{\beta_{l,m}}^*=S_{\beta_{l,m}}S_{{\alpha}_{t}}^*S_{{\alpha}_{t}}S_{\beta_{l,m}}^*
$$
for $\alpha_t=e_{44}^{t}, t\in\mathbb{N}$ since $r(\alpha_t)=v_4$ and $r(\beta_{l,m})=v_4$. Then 
$$
\Phi(S_{{\beta}_{l,m}}S_{{\alpha}_{2l}}^*\otimes S_{{\alpha}_{2l}}S_{{\beta}_{l,m}}^*)=S_{{\beta}_{l,m}}S_{{\alpha}_{2l}}^*S_{{\alpha}_{2l}}S_{{\beta}_{l,m}}^*\otimes u^{k-l}{u^*}^lu^{2l}=S_{\beta_{l,m}}S_{\beta_{l,m}}^*\otimes u^k.
$$
Hence we obtain that 
$$
\begin{aligned}
&\Phi\left(\sum_{\beta\in B_1} S_{\beta}\otimes S_\beta^* +  \sum_{\beta\in B_2} S_{{\beta}_{l,m}}S_{{\alpha}_{2l}}^*\otimes S_{{\alpha}_{2l}}S_{{\beta}_{l,m}}^* + \sum_{\beta\in B_3} S_{{\beta}_{l}}S_{{\alpha}_l}^*\otimes S_{{\alpha}_l}S_{{\beta}_{l}}^* \right) \\
& \ \ = \sum_{\beta\in B} S_\beta{S_\beta}^*\otimes u^k = P_{v_2}\otimes u^k. 
\end{aligned}
$$
The calculations for $i=3,4$ follows by a similar approach, where the one for $i=3$ is an easier computation and the one for $i=4$ requires more but similar calculations. 
\end{proof}

\section{Projections in $\mathcal{O}(\C P_q^{3,\mu})$}\label{projections}


In applications, it is useful to have elements of $K$-theory represented by projections not merely in matrices over the $C^*$-algebra 
$C(\C P_q^{3,\mu})$ but by matrices with entries in the polynomial  algebra $\mathcal{O}(\C P_q^{3,\mu})$. 

Following the approach of 
\cite[Lemma 3.2]{DAnLan:ant}, we give  a construction of projections in matrices over $\mathcal{O}(\C P_q^{3,\mu})$. The projections are constructed 
in a recursive way. We will explicitly write down two of them. These two non trivial projections will serve as candidates for the generators of the K-theory together with the  trivial projection and the non-trivial polynomial generator of the K-theory of $C(S_q^4)$, $G$, given by \cite[Proposition 7]{bct1}
$$
G=\begin{pmatrix}
q^2R & 0 & qa & q^2b \\ 
0 & q^2R & qb^* & -q^3a^* \\ 
qa^* & qb & 1-R & 0 \\
q^2b^* & -q^3 a & 0 & 1-q^4R
\end{pmatrix}.
$$

More specifically,  for $N\in \Nb$ let 
$$
\psi_{j_1,j_2,j_3,j_4}^N= \sqrt{c_{j_1,j_2,j_3,j_4}(N)}{z_4^*}^{j_4}z_3^{j_3}z_2^{j_2}{z_1^*}^{j_1}  
$$
where $j_1+j_2+j_3+j_4=N$. Let $A^N$ be the square matrix with $\psi_{j_1,j_2,j_3,j_4}^N$ as the elements in the first column and zero elsewhere. Note that  $({A^N}^*A^N)_{ij}=0$ for $i,j\neq 1$. We wish to find coefficients $c_{j_1,j_2,j_3,j_4}(N)$ such that $({A^N}^*A^N)_{11}=1$. Then $P_N=A^N{A^N}^*$ will be a projection in matrices over $\mathcal{O}(\C P_q^{3,\mu})$ for all $N\in\Nb$.

By the condition $({A^N}^*A^N)_{11}=1$ and using that $q^4z_1z_1^*+q^2z_2^*z_2+z_3^*z_3+z_4z_4^*=1$ we have: 
$$
\begin{aligned}
1&=\sum_{j_1+j_2+j_3+j_4=N+1}c_{j_1,j_2,j_3,j_4}(N+1) (z_1^{j_1}{z_2^*}^{j_2}{z_3^*}^{j_3}z_4^{j_4})(z_1^{j_1}{z_2^*}^{j_2}{z_3^*}^{j_3}z_4^{j_4})^* \\
&= \sum_{j_1+j_2+j_3+j_4=N+1}c_{j_1,j_2,j_3,j_4}(N+1) (z_1^{j_1}{z_2^*}^{j_2}{z_3^*}^{j_3}z_4^{j_4})(q^4z_1z_1^*+q^2z_2^*z_2+z_3^*z_3+z_4z_4^*)(z_1^{j_1}{z_2^*}^{j_2}{z_3^*}^{j_3}z_4^{j_4})^*
\\
&= \sum_{j_1+j_2+j_3+j_4=N+1}c_{j_1,j_2,j_3,j_4}(N+1) q^4q^{2(j_2+j_3-j_4)} 
(z_1^{j_1+1}{z_2^*}^{j_2}{z_3^*}^{j_3}z_4^{j_4})(z_1^{j_1+1}{z_2^*}^{j_2}{z_3^*}^{j_3}z_4^{j_4})^* \\
&+ \sum_{j_1+j_2+j_3+j_4=N+1}c_{j_1,j_2,j_3,j_4}(N+1) q^2q^{2(j_3-j_4)}
(z_1^{j_1}{z_2^*}^{j_2+1}{z_3^*}^{j_3}z_4^{j_4})(z_1^{j_1}{z_2^*}^{j_2+1}{z_3^*}^{j_3}z_4^{j_4})^* 
\\
&+ \sum_{j_1+j_2+j_3+j_4=N+1}c_{j_1,j_2,j_3,j_4}(N+1) q^{-2j_4}
(z_1^{j_1}{z_2^*}^{j_2}{z_3^*}^{j_3+1}z_4^{j_4})(z_1^{j_1}{z_2^*}^{j_2}{z_3^*}^{j_3+1}z_4^{j_4})^* \\
&+ \sum_{j_1+j_2+j_3+j_4=N+1}c_{j_1,j_2,j_3,j_4}(N+1)
(z_1^{j_1}{z_2^*}^{j_2}{z_3^*}^{j_3}z_4^{j_4+1})(z_1^{j_1}{z_2^*}^{j_2}{z_3^*}^{j_3}z_4^{j_4+1})^* 
\\
&= \sum_{j_1-1+j_2+j_3+j_4=N}c_{j_1-1,j_2,j_3,j_4}(N) q^4q^{2(j_2+j_3-j_4)} 
(z_1^{j_1}{z_2^*}^{j_2}{z_3^*}^{j_3}z_4^{j_4})(z_1^{j_1}{z_2^*}^{j_2}{z_3^*}^{j_3}z_4^{j_4})^* \\
&+ \sum_{j_1+j_2-1+j_3+j_4=N}c_{j_1,j_2-1,j_3,j_4}(N) q^2q^{2(j_3-j_4)}
(z_1^{j_1}{z_2^*}^{j_2}{z_3^*}^{j_3}z_4^{j_4})(z_1^{j_1}{z_2^*}^{j_2}{z_3^*}^{j_3}z_4^{j_4})^* 
\\
&+ \sum_{j_1+j_2-1+j_3+j_4=N}c_{j_1,j_2,j_3-1,j_4}(N) q^{-2j_4}
(z_1^{j_1}{z_2^*}^{j_2}{z_3^*}^{j_3}z_4^{j_4})(z_1^{j_1}{z_2^*}^{j_2}{z_3^*}^{j_3}z_4^{j_4})^* \\
&+ \sum_{j_1+j_2+j_3+j_4-1=N}c_{j_1,j_2,j_3,j_4-1}(N)
(z_1^{j_1}{z_2^*}^{j_2}{z_3^*}^{j_3}z_4^{j_4})(z_1^{j_1}{z_2^*}^{j_2}{z_3^*}^{j_3}z_4^{j_4})^*.
\end{aligned} 
$$
Hence, we obtain the following recursive equation: 
$$
\begin{aligned}
c_{j_1,j_2,j_3,j_4}(N+1)&=q^4q^{2(j_2+j_3-j_4)}c_{j_1-1,j_2,j_3,j_4)}(N) + q^2q^{2(j_3+j_2)}c_{j_1,j_2-1,j_4,j_5}(N)\\ &\ \ +q^{-2j_4}c_{j_1,j_2,j_3-1,j_4}(N)+c_{j_1,j_2,j_3,j_4-1}(N). 
\end{aligned}
$$
Since $c_{0,0,0,0}(0)=1$, we have 
$$
c_{1,0,0,0}(1)=q^4, \ c_{0,1,0,0}(1)=q^2, \ c_{0,0,1,0}(1)=1, \ c_{0,0,0,1}(1)=1
$$
and 
$$
P_1=A^1{A^1}^*=\begin{pmatrix}
q^4 z_1^*z_1 & q^3z_1^*z_2^* & q^2z_1^*z_3^* & q^2z_1^*z_4 \\
q^3z_2z_1 & q^2z_2z_2^* & qz_2z_3^* & qz_2z_4 \\
q^2z_3z_1 & qz_3z_2^* & z_3z_3^* & z_3z_4 \\
q^2z_4^*z_1 & qz_4^*z_2^* & z_4^*z_3^* & z_4^*z_4
\end{pmatrix}.
$$
For $N=2$ the coefficients are
$$
\begin{aligned}
c_{1,1,0,0}(2)&=q^6(1+q^2), \ c_{1,0,1,0}(2)=q^4(1+q^2), \ c_{1,0,0,1}(2)=q^4(1+q^{-2}) \\
c_{0,1,1,0}(2)&=q^2(1+q^2), \ c_{0,1,0,1}(2)=q^2(1+q^{-2}), \ c_{0,0,1,1}(2)=q^{-2}+1 \\
c_{2,0,0,0}(2)&=q^8, \ c_{0,2,0,0}(2)=q^4, \ c_{0,0,2,0}(2)=1, \ c_{0,0,0,2}(2)=1.  
\end{aligned} 
$$
Let $P_2=A^2{A^2}^*$ which is a square matrix of size 10 where the column vector defining $A^2$ is 
$$
\begin{aligned}
(\psi_{j_1,j_2,j_3,j_4}^2)_{j_1+j_2+j_3+j_4=2}&=(q^3\sqrt{1+q^2}z_2z_1^*, q^2\sqrt{1+q^2}z_3z_1^*, q^2\sqrt{1+q^{-2}}z_4^*z_1^*, q\sqrt{1+q^2}z_3z_2, \\
&\ \ \ \ \ q\sqrt{1+q^{-2}}z_4^*z_2, \sqrt{1+q^{-2}}z_4^*z_3, q^4{z_1^*}^2, q^2{z_2^2}, z_3^2, {z_4^*}^2). 
\end{aligned} 
$$

Consider now the $*$-homomorphism 
$$
\pi: C(\C P_q^{3,\mu})\to C(\C P_q^{3,\mu})/J_2\cong C(SU_q(2))^{\mu}
$$
where $z_1, z_2$ are mapped to $0$ and $z_3,z_4$ to themselves. We also denote by $\pi$ the natural extension of $\pi$ to matrices over $\pol(\C P_q^{3,\mu})$. Note that $\pi(C(S_q^4))=\C\cdot 1$ and $K_0(\pi)([1]_0)=[1]_0, K_0(\pi)([G]_0)=[1]_0$. 

\begin{proposition}
The polynomial projections $K_0(\pi)(P_1)$ and $K_0(\pi)(P_2)$ are generators of $K_0(C(SU_q(2))^{\mu})$. 
\end{proposition}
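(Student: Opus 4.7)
The plan is to identify $K_0(\pi)([P_1]_0)$ and $K_0(\pi)([P_2]_0)$ with the classes of the standard line bundle projections over the Podle\'{s} sphere, and then to invoke the well-known basis of its $K$-theory.

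First, I would compute $\pi(P_N)$ for $N=1,2$ explicitly. Since $\pi$ kills $z_1$ and $z_2$, only the entries $\psi^N_{0,0,j_3,j_4}$ with $j_3+j_4=N$ survive in the defining column $A^N$, and under the identifications $z_3\mapsto\gamma$, $z_4\mapsto\alpha^*$ the surviving subvector $\tilde v_N$ has entries $\sqrt{c_{0,0,j_3,j_4}(N)}\,\gamma^{j_3}\alpha^{j_4}$. Each such monomial has common $\mu$-weight $-N$, since $\mu_w(\gamma)=\bar w\gamma$ and $\mu_w(\alpha)=\bar w\alpha$. Because $(A^N)^*A^N$ has top-left entry equal to $1$ by construction and $\pi$ is a $*$-homomorphism, it follows automatically that $\tilde v_N^*\tilde v_N=1$ (and this can also be verified directly from the $SU_q(2)$ relations), so $\pi(P_N)=\tilde v_N\tilde v_N^*$ is a rank-one projection in matrices over $C(SU_q(2))^\mu$.

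Second, I would recognise $\tilde v_N\tilde v_N^*$ as the standard Peter--Weyl projection for the quantum line bundle $\mathcal{L}_{-N}$ associated to the principal $U(1)$-bundle $C(SU_q(2))^\mu\hookrightarrow C(SU_q(2))$. Indeed, the right $\mathcal{O}(SU_q(2))^\mu$-module
$$
\mathcal{L}_{-N}:=\{x\in\mathcal{O}(SU_q(2)):\mu_w(x)=\bar w^N x\}
$$
is finitely generated projective with generating set $\{\gamma^{j_3}\alpha^{j_4}\}_{j_3+j_4=N}$, which (up to the normalising scalars $\sqrt{c_{0,0,j_3,j_4}(N)}$) are precisely the entries of $\tilde v_N$. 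Hence $K_0(\pi)([P_N]_0)=[\mathcal{L}_{-N}]_0$ in $K_0(C(SU_q(2))^\mu)$.

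Finally, since the Podle\'{s} sphere is (as recalled in the paper) the minimal unitisation of $\mathcal{K}$, one has $K_0(C(SU_q(2))^\mu)\cong\Z^2$; moreover, pairing with the rank character and the standard degree cocycle yields $[\mathcal{L}_n]_0\mapsto(1,n)$ under this isomorphism (see e.g.\ \cite{hs}). Consequently $K_0(\pi)([P_1]_0)$ and $K_0(\pi)([P_2]_0)$ are sent to $(1,-1)$ and $(1,-2)$ respectively, and since
$$
\det\begin{pmatrix}1&1\\-1&-2\end{pmatrix}=-1,
$$
they form a basis of $K_0(C(SU_q(2))^\mu)$. The main obstacle I expect lies in step two, namely the clean verification of the identification $[\tilde v_N\tilde v_N^*]_0=[\mathcal{L}_{-N}]_0$ via the Peter--Weyl decomposition of $\mathcal{O}(SU_q(2))$; the determinant check in step three is then elementary.
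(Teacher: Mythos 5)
Your argument is correct in substance but follows a genuinely different route from the paper. The paper computes $\pi(P_1)$ and $\pi(P_2)$ explicitly, transports them to the generators used by D'Andrea and Landi, and then evaluates the index pairing with the two $1$-summable Fredholm modules $[\mu_0],[\mu_1]$ of the Podle\'{s} sphere by a direct trace/geometric-series computation, arriving at the matrix $\left(\begin{smallmatrix}1&1\\-1&-2\end{smallmatrix}\right)$ whose invertibility over $\Zb$ gives the claim. You instead recognise $\pi(P_N)=\tilde v_N\tilde v_N^*$ as the canonical projection of the weight-$N$ line bundle over the standard Podle\'{s} sphere and quote the known classes $(1,\pm N)$; the determinant check is then the same. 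What your route buys is conceptual clarity (it explains \emph{why} the degrees are $-1$ and $-2$), but it shifts the burden onto two facts you only cite: (i) that the degree-$N$ monomials in $\alpha,\gamma$ generate the weight module, i.e.\ principality of the quantum Hopf fibration (this is standard, via a strong connection or Peter--Weyl, but it is exactly the nontrivial content of your step two, and note the usual left/right-module sign ambiguity in whether $\tilde v_N\tilde v_N^*$ represents $\mathcal{L}_{-N}$ or $\mathcal{L}_{N}$ --- harmless here since $\left(\begin{smallmatrix}1&1\\1&2\end{smallmatrix}\right)$ is also invertible); and (ii) the computation of the pairing of $[\mathcal{L}_n]$ with the Fredholm modules, for which Masuda--Nakagami--Watanabe or Hajac's work on the quantum Hopf bundle is a more apt reference than \cite{hs}. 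The paper's direct trace computation avoids both citations at the cost of an explicit but elementary calculation; your first step (that only the $j_1=j_2=0$ entries survive and that $\tilde v_N^*\tilde v_N=1$ follows since $\pi$ is a $*$-homomorphism) is clean and agrees with the paper.
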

\begin{proof}
We have 
$$
K_0(\pi)([P_1]_0)= \left[\begin{pmatrix}
z_3z_3^* & z_3z_4 \\ z_4^*z_3^* & z_4^*z_4
\end{pmatrix}\right]_0
$$
$$
K_0(\pi)([P_2]_0)=\left[
\begin{pmatrix}
(1+q^{-2})z_4^*z_3z_3^*z_4 & (1+q^{-2})^{1/2}z_4^*z_3{z_4}^2 & (1+q^{-2})^{1/2}z_4^*z_3{z_3^*}^2 \\
(1+q^{-2})^{1/2}{z_4^*}^2z_3^*z_4 & {z_4^*}^2z_4^2 & {z_4^*}^2{z_3^*}^2 \\
(1+q^{-2})^{1/2}z_3^2z_3^*z_4 & z_3^2z_4^2 & z_3^2{z_3^*}^2
\end{pmatrix}
\right]_0.
$$
Recall that $C(\C P_q^{3,\mu})/J_2$ is  isomorphic to the quantum projective space $C(\C P_q^1)$. In \cite{TYJ91} the 1-summable Fredholm modules of the Podlés sphere are constructed, we apply here the notation from \cite{DAL2010}. The 1-summable Fredholm modules are denoted $\mu_0$ and $\mu_1$, where the corresponding Hilbert spaces are denoted by $H_0$ and $H_1$ respectively. We can determine if the two projections generate the $K$-theory of $C(\C P_q^1)$ by considering the index pairing with these Fredholm modules. 

Using the previous described isomorphism $z_4\mapsto\alpha^*, z_3\mapsto \gamma$ between $C(\C P_q^{3,\mu})/J_2$ and  $C(SU_q(2))^{\mu}$ we obtain that 
$$
[\pi(P_1)]_0=\left[\begin{pmatrix}
\gamma \gamma^* & \gamma\alpha^* \\ \alpha\gamma^* & \alpha\alpha^*
\end{pmatrix}\right]_0, 
$$
$$
[\pi(P_2)]_0 = \left[
\begin{pmatrix}
(1+q^{-2})\alpha\gamma\gamma^*\alpha^* & (1+q^{-2})^{1/2}\alpha\gamma{\alpha^*}^2 & (1+q^{-2})^{1/2}\alpha\gamma{\gamma^*}^2 \\
(1+q^{-2})^{1/2}{\alpha}^2\gamma^*\alpha^* & {\alpha}^2{\alpha^*}^2 & {\alpha}^2{\gamma^*}^2 \\
(1+q^{-2})^{1/2}{\gamma}^2\gamma^*\alpha^* & {\gamma}^2{\alpha^*}^2 & {\gamma}^2{\gamma^*}^2
\end{pmatrix}
\right]_0
$$
in $C(SU_q(2))^{\mu}$. 
Translating this to the notation used in \cite{DAL2010} we have $\alpha\mapsto z_0^*, \gamma\mapsto z_1$. Using the isomorphism  $z_0\mapsto z_0, z_1\mapsto z_1^*$ of $C(\C P_q^1)$ with itself, the above projections become the following in $C(\C P_q^1)$: 
$$
[\pi(P_1)]_0=\left[\begin{pmatrix}
z_1^*z_1 & z_1^*z_0 \\ z_0^*z_1 & z_0^*z_0. 
\end{pmatrix}\right]_0
$$
$$
[\pi(P_2)]_0 = \left[
\begin{pmatrix}
(1+q^{-2})z_0^*z_1^*z_1z_0 & (1+q^{-2})^{1/2}z_0^*z_1^*z_0^2 & (1+q^{-2})^{1/2}z_0^*z_1^*{z_1}^2 \\
(1+q^{-2})^{1/2}{z_0^*}^2 z_1^*z_0 & {z_0^*}^2{z_0}^2 & {z_0^*}^2{z_1}^2 \\
(1+q^{-2})^{1/2}{z_1^*}^2z_1 z_0 & {z_1^*}^2{z_0}^2 & {z_1^*}^2{z_1}^2
\end{pmatrix}
\right]_0
$$
The index paring gives us the following results.  
\begin{table}[H]
\centering
\begin{tabular}{l|cc}
          & \multicolumn{1}{l}{$[\pi(P_1)]_0$} & \multicolumn{1}{l}{$[\pi(P_2)]_0$} \\ \hline
$[\mu_0]$ & 1                         & 1 \\
$[\mu_1]$ & -1                         & -2                                              
\end{tabular}
\end{table}
Below is an example of such a calculation. By geometric series we have 
$$
\begin{aligned} 
\inpro{[\mu_1]}{[\pi(P_2)]_0}&=
Tr_{H_1}(\pi_0-\pi_1)(Tr(\pi(P_2))) \\
&=\sum_{m_1}^{\infty} \left(1-(2+q^{-2}-q^2)q^{2(m_1+1)}(1-q^{2(m_1+1)})-(1-q^{2(m_1+1)})^2-q^{4m_1}\right)\\ 
&=
-\frac{(2+q^{-2}-q^{2})q^2}{1-q^2}+\frac{(2+q^{-2}-q^2)q^4}{1-q^4}-\frac{1}{1-q^4}-\frac{q^4}{1-q^4}+\frac{2q^2}{1-q^2}=-2.
\end{aligned} 
$$ 
Since the matrix $\begin{pmatrix}
1 & 1 \\ -1 & -2
\end{pmatrix}$ is invertible in $M_2(\Z)$ we obtain that $[\pi(P_1)]$ and $[\pi(P_2)]$ generate $K_0(C(\C P_q^1))$. Hence  $K_0(\pi)([P_1])_0$ and $K_0(\pi)([P_2]_0)$ are nontrivial generators of $K_0(C(SU_q(2)^{\mu}))$. 
\end{proof}

To summarize, we obtain a set of elements $[1]_0, [G]_0, [P_i]_0,i=1,2$  in $K_0(C( \C P_q^{3,\mu}))$ which satisfies: 
\begin{enumerate}
\item $[1]_0, [G]_0$ generate $K_0(C(S_q^4))$.
\item $[\pi(P_1)]_0$ and $[\pi(P_2)]_0$ generate $K_0(C(\C P_q^1))$. 
\item $[\pi(1)]_0=[\pi(G)]_0=[1]_0$. 
\end{enumerate}
This indicates a relationship between the K-theory of the total space of the quantum twistor bundle with the K-theory of the base and fibre. 

It would be useful to have a set of $1$-summable Fredholm modules of $C(\CP_q^{3,\mu})$ providing the full pairing between the corresponding $K$-theory and $K$-homology. In this way we would be able to show whether  $[1]_0, [G]_0, [P_i]_0,i=1,2$  generate $K_0(C( \C P_q^{3,\mu}))$. 
Unfortunately, the construction from \cite{DAL2010} is not directly applicable to our case, involving different and seemingly more complicated 
relations between the generators. We hope to address this issue in a separate paper. 

\nocite{*}

\end{document}